\newtheorem{definition}{\textbf{Definition}}[section]
\newtheorem{Thm}[definition]{\textbf{Theorem}}
\title{A comparative study of three mathematical approaches applied to the reversal of AMR}
\author{Sebastian Builes$^{1*}$,  Jhoana P Romero-Leiton$^2$ and   Leon A. Valencia$^{1}$ \\ \\
$^{1}$ Insitute of Mathematics, University of Antioquia, Medellín, Colombia \\
$^2$ Department of Mathematical Sciences, University of Puerto Rico at Mayagüez,\\ Puerto Rico, USA \\ \\
$^*$ Correspondence: lalexander.valencia@udea.edu.co  }
\date{}
\begin{document}
\maketitle

\begin{abstract}
In this work, we study the qualitative properties of a simple mathematical model inspired by antimicrobial resistance (AMR), focusing on the reversal of resistance. In particular, we analyze the model from three perspectives: ordinary differential equations (ODEs), stochastic differential equations (SDEs) driven by Brownian motion, and fractional differential equations (FDEs) with Caputo temporal derivatives. Finally, we perform numerical experiments using data from {\it Escherichia coli} exposed to colistin to assess the validity of the qualitative properties of the model. \\ \\
{\bf Keywords:} Ordinary differential equation, stochastic differential equation, fractional differential equation, Caputo derivative, Brownian motion, qualitative properties.\\ \\
\noindent
{\bf Mathematics Subject Classification}: 60H10, 92D25,  92C37, 92B05, 92C60.

\end{abstract}

\section{Introduction}
 Mathematical modelling of biological systems is an important tool for describing and predicting their behaviour. Although these models are not ideal, their study is necessary because they can approximate the reality  under certain conditions. Generally, these models can be viewed from the deterministic, stochastic, and fractional perspectives. Accurately forecasting disease evolution over time is essential for effective modelling. Mathematical models have mostly been deterministic and highly idealized, assuming predictable progression of diseases based on fixed parameters. However, it soon became clear that the dynamics of infectious agents could be unpredictable and could be influenced by inherent population variability or memory effects. This realization has led to the development of stochastic models that incorporate randomness to better capture uncertainties in disease spread. As mathematical theory has advanced, fractional models have emerged, extending the scope of traditional models by incorporating fractional derivatives to account for complex real-world behaviors that are not adequately described by standard deterministic or stochastic approaches. This study aims to mathematically study the problem of antimicrobial resistance (AMR). Specifically, we focus on analysing the qualitative properties of a resistance reversal model, such as the existence, uniqueness, stability, invariance, and ergodicity of the solutions. 
\par 
 The mathematical study of epidemiological models began in 1927 with the introduction of the SIR model, proposed by Kermack and McKendrick \cite{Kermack}, which describes a closed population of susceptible, infected, and recovered individuals. In 1978, Capasso and Serio \cite{Capasso} extended this model by modifying the incidence rate to account for psychological effects on the infected population. The SIR model was later generalized to include births, deaths, and loss of immunity, resulting in the SIRS model, which allows recovered individuals to lose immunity and become susceptible. If the recovered individuals are assumed to have no immunity, the model is simplified to an SIS model. 
\par 
 With advancements in the probability theory, epidemiological models have increasingly incorporated probabilistic elements. Lu \cite{Lu} and Tornatore and Buccellato \cite{Tor} investigate the stability of equilibrium points in stochastic SIR models. Gray \cite{Gray},  Rifhat \cite{rifhat2020dynamical} and Xu \cite{Xu} examined the ergodic properties of the stochastic SIS models. These studies introduce stochastic perturbations to the disease transmission coefficient $\beta$ in the form $\widehat\beta dt = \beta dt + \sigma dB(t)$, where $\sigma$ represents the stochastic perturbation rate and $B(t)$ denotes Brownian motion. In addition, \cite{Mau} explored a fractional SIR model, extending the temporal derivative to a Caputo fractional derivative of order $\alpha$ with $\alpha \in (0,1)$.  
\par 
 Antimicrobial resistance (AMR) can be viewed as an epidemic model that illustrates the spread and impact of resistance patterns within a population of susceptible bacteria. AMR remains a major challenge in modern medicine \cite{andersson2010antibiotic}. It persists even in the absence of antibiotic pressure due to mechanisms such as horizontal gene transfer, particularly conjugation, which allows resistant bacteria to survive despite fitness costs \cite{lopatkin2017persistence}. This indicates that simply reducing antibiotic use may not be sufficient to reverse the resistance \cite{lopatkin2017persistence}.  However, AMR can be reversed because of the fitness costs associated with resistance, because sensitive bacteria may outcompete resistant bacteria in antibiotic-free environments \cite{andersson2010antibiotic}. Reversible mutations and the loss of resistance plasmids can also restore bacteria to a sensitive state, allowing sensitive populations to eventually dominate \cite{sundqvist2010little, bennett2008plasmid, andersson2011persistence}.  Various models have described the AMR dynamics, most of which are deterministic. However, to the best of our knowledge, no existing mathematical models simultaneously address resistance reversal using deterministic, stochastic, and fractional approaches.  
\par 
 This study is divided into two main parts. First, we analyze a mathematical model of AMR reversal deterministically, stochastically, and fractionally. The qualitative properties of the model are then analyzed. More specifically, we ensured the existence and uniqueness of solutions to these models. In addition, we ensured the invariance of the solutions within a biologically relevant set. We also demonstrated the stability of trivial solutions in these models. Finally, we provide the conditions for disease extinction, disease persistence, and stationary measures in some models. In the second part, we use the data from {\it E. coli} exposed to colistin to numerically validate the theoretical results.  

\section{Mathematical model formulation}
Let $ S(t) $ denote the number of sensitive bacteria at time $ t $,  and $ R(t) $ represent the number of resistant bacteria at time $t $. To formulate our mathematical model, we make the following basic assumptions: 
\begin{itemize}
    \item[-] The total bacterial population is constant at each time, such that $S(t) + R(t)=N$, where $ N $ is the total bacteria population.
    \item[-]  Let $ \mu $ represents the turnover rate of bacteria, and $ \beta $ denotes the rate at which sensitive bacteria acquire plasmids through conjugation. Resistant bacteria lose their resistance genes and revert to a sensitive state at a rate $ \gamma $.
    \item[-] Sensitive bacteria reduce the rate of acquiring resistance by selective pressure as the population of resistant bacteria increases, which is modeled by a functional response $g(R)=\dfrac{R}{1+\epsilon R} $.
    
\end{itemize}
Consequently, our mathematical model is described by the following deterministic equations:
\begin{equation}\label{model-gen}
    \left\{ \begin{array}{rcl}
              \dfrac{dS}{dt} &= &\mu(N-S)+\gamma R-\beta \dfrac{R}{1+{\epsilon} R}S \\ \\
              \dfrac{dR}{dt}&= &\beta \dfrac{R}{1+\epsilon R}S-(\gamma+\mu)R.
            \end{array} \right.
\end{equation}
In this work the term $\mathbb{R}_{0}^{+}$ represents non-negative real numbers. The equations in \eqref{model-det} are coupled by the assumption $N=S(t)+R(t)$ for all $t \in \mathbb{R}_{0}^{+}$. Thus, we can study one equation in terms of resistant bacteria $R(t)$. More precisely, we will study the deterministc equation:
\begin{equation}\label{model-det}
 \left\{ \begin{array}{rcl}
    \dfrac{dR}{dt} &=& \beta \dfrac{R}{1+\epsilon R}(N - R) - (\gamma + \mu)R.
    \end{array} \right.
\end{equation}
In the stochastic model, we perturb the rate $\beta$ as 
\begin{equation*}\label{sto-pert}
\widehat\beta dt = \beta dt + \sigma dB(t),
\end{equation*}
where $\sigma$ is the rate of stochastic perturbation and $B$ is a Brownian motion in a probability space $(\Omega,\mathcal{F},\mathbb{P})$. Therefore, the stochastic differential equation takes the form:
\begin{equation}\label{model-stoc}
\left\{ \begin{array}{rcl}
    dR &=& \left(\beta \dfrac{R}{1+\epsilon R}(N - R) - (\gamma + \mu)R\right) dt + \sigma \dfrac{R}{1+\epsilon R}(N - R) dB(t).
    \end{array} \right.
\end{equation}
This type of perturbation appears in \cite{Gray, Tor}.
\par
Finally, we introduce the fractional model, which we represent with the fractional differential equation:
\begin{equation}\label{model-fra}
\left\{ \begin{array}{rcl}
    \dfrac{d^{\alpha}R}{dt^{\alpha}} &=& \beta \dfrac{R}{1+\epsilon R}(N - R) - (\gamma + \mu)R,
    \end{array} \right.
\end{equation}
where $\dfrac{d^{\alpha}}{dt^{\alpha}}$ is the Caputo derivative of order $\alpha$, with $\alpha \in (0,1)$. This type of fractional model equation can be found in \cite{Mau} and \cite{Guo}.\\
In this work, we define threshold parameters that characterize the qualitative properties of (\ref{model-det}), (\ref{model-stoc}), and (\ref{model-fra}). The ideas for proving the qualitative properties of (\ref{model-stoc}) were adapted from \cite{builes2024stochastic} and are presented here in terms of the stochastic threshold parameter.
\section{Theoretical results}
\subsection{Defining thresholds}
In order to facilitate the mathematical calculations of the three models \eqref{model-det}-\eqref{model-fra} and make the results easier to understand, we defined certain thresholds, similar to the basic reproduction numbers in epidemiological models, and introduced Lyapunov operators. These thresholds and operators guide the stability and persistence of plasmid-carrying bacteria within a population in different modelling contexts.

\begin{table}[H]
\centering
\begin{tabular}{llll}
  \hline
  Approach & Threshold & Lyapunov operator& \\ \hline 
 Deterministic  &$\mathcal{K}_{0}^{d}:=\dfrac{\beta N}{(\gamma+\mu )}$ &$L_d:=\displaystyle\left(\beta\dfrac{R}{1+ \epsilon R}(N-R)-(\gamma+\mu)R\right)\frac{d}{dR}$ \\
 Stochastic &$ \mathcal{K}_{0}^{s}:=\dfrac{\beta N}{(\gamma+\mu)}-\dfrac{\sigma^2 N^2}{2(\gamma+\mu)} $&$L_s:=L_{d}+\dfrac{1}{2}\left(\sigma^{2}\dfrac{R^2}{(1+\epsilon R)^{2}}(N-R)^2\right)\dfrac{d^2}{dR^2} $\\
 Fractionary &$\mathcal{K}_{0}^{f}:=\dfrac{\beta N}{(\gamma+\mu)}$ & $L_f:=\dfrac{d^{\alpha}}{dt^{\alpha}}$ \\
  \hline 
\end{tabular}
\caption{Thresholds and Lyapunov operators for the three mathematical approaches considered in our mathematical models \eqref{model-det}-\eqref{model-fra}.}
\label{table2}
\end{table}
\subsection{The deterministic approach}
This section explores the qualitative properties of the deterministic model (\ref{model-det}). Let us note that if $\beta \dfrac{R}{1+\epsilon R}(N - R) - (\gamma + \mu)R=0$, the equilibrium points of (\ref{model-det}) are 
\begin{equation}\label{eq-det}
R=0 \quad \text{and} \quad R = \dfrac{\beta N - \gamma - \mu}{\beta + \epsilon(\gamma + \mu)}:=\xi_{d}. 
\end{equation}
We start showing that $(0,N)$ is an invariant set. More precisely, if $R_0 \in (0,N)$, then there exists a unique global solution  $R(t;R_0)$ of (\ref{model-det}) such that $R(t;R_0) \in (0,N)$ for all $t \in \mathbb{R}_{0}^{+}$. For this end, we use Lyapunov operator techniques, which are typically applied in the case of SDEs, but these techniques can also be used in the deterministic case, where the stochastic noise is zero (see \cite{khasminskii2011stochastic}).
 \begin{Thm}[{\bf Invariance}]
    For any $R_0 \in (0,N)$, there exists a unique global solution of (\ref{model-det}) invariant in $(0,N).$
 \end{Thm}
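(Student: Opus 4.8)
The plan is to combine a classical local existence-and-uniqueness argument with a Lyapunov (Khasminskii-type) estimate that prevents the trajectory from reaching the boundary of $(0,N)$ in finite time. First I would note that the vector field $f(R):=\beta\frac{R}{1+\epsilon R}(N-R)-(\gamma+\mu)R$ is $C^{\infty}$, hence locally Lipschitz, on the open set $(-1/\epsilon,\infty)\supset[0,N]$ (the only singularity of the rational term sits at $R=-1/\epsilon<0$). Therefore, for any $R_0\in(0,N)$ the Cauchy--Lipschitz theorem yields a unique maximal solution $R(t;R_0)$ on an interval $[0,T_{\max})$. It then remains to show that the solution never leaves $(0,N)$ and that $T_{\max}=\infty$.

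For the invariance, following the Lyapunov-operator philosophy of Table~\ref{table2} (and of \cite{khasminskii2011stochastic}), I would introduce
\[
V(R):=\ln\!\frac{N^{2}}{R(N-R)},\qquad R\in(0,N),
\]
which is smooth on $(0,N)$, bounded below by $\ln 4$, and blows up, $V(R)\to+\infty$, as $R\to0^{+}$ or $R\to N^{-}$. A direct computation gives $V'(R)=\frac{2R-N}{R(N-R)}$, and hence
\[
L_dV(R)=f(R)\,V'(R)=\beta\,\frac{2R-N}{1+\epsilon R}-(\gamma+\mu)\,\frac{2R-N}{N-R}.
\]
The key claim is that $L_dV$ is bounded above on the whole interval: the first term lies in a bounded range for $R\in(0,N)$; the second term tends to $-\infty$ as $R\to N^{-}$ (this is exactly the stabilizing effect of the drift $-(\gamma+\mu)R$ near $R=N$) and tends to the finite value $(\gamma+\mu)-\beta N$ as $R\to0^{+}$ (no blow-up there, since $1+\epsilon R$ stays away from $0$). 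By continuity this yields a finite constant $C:=\sup_{R\in(0,N)}L_dV(R)<\infty$.

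With this in hand, along the solution one has $\tfrac{d}{dt}V(R(t))=L_dV(R(t))\le C$, so $V(R(t))\le V(R_0)+Ct$ for all $t\in[0,T_{\max})$. Introducing the exit times $\tau_k:=\inf\{\,t\ge 0:\ R(t)\notin(1/k,\,N-1/k)\,\}$, on the event $\{\tau_k<\infty\}$ we would get $V(R(\tau_k))\le V(R_0)+C\tau_k$; but $V(R(\tau_k))\to\infty$ as $k\to\infty$ while the right-hand side remains finite on any bounded time interval, so $\tau_k\to\infty$. Consequently $R(t)\in(0,N)$ for every $t\in[0,T_{\max})$, the trajectory stays in a compact subset of the domain of $f$, and standard continuation forces $T_{\max}=\infty$. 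This gives the unique global solution invariant in $(0,N)$.

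The step I expect to be the main obstacle is precisely the verification that $L_dV$ is bounded above on all of $(0,N)$ — i.e.\ confirming the boundary limits above and that the inward-pointing part of the drift dominates near $R=N$. As an elementary cross-check one can argue directly on the vector field: $f(0)=0$ (so positivity is preserved by uniqueness, since $R\equiv0$ is itself a solution) and $f(N)=-(\gamma+\mu)N<0$ (so $f$ points strictly inward at $R=N$). I would nonetheless keep the Lyapunov formulation, since it is the version that transfers, with the operator $L_s$ of Table~\ref{table2}, to the stochastic model \eqref{model-stoc}.
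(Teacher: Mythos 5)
Your proposal is correct and follows essentially the same route as the paper: a Lyapunov (Khasminskii-type) barrier function that blows up at $R=0$ and $R=N$, an upper bound on $L_dV$ along the flow, and an exit-time/continuation argument; the paper simply takes $V(R)=\tfrac{1}{R}+\tfrac{1}{N-R}$ with the bound $L_dV\le cV$, whereas you take a logarithmic $V$ with the additive bound $L_dV\le C$ (which indeed holds, e.g. $L_dV\le \beta N+2(\gamma+\mu)$ on $(0,N)$, since $\bigl|\tfrac{2R-N}{1+\epsilon R}\bigr|\le N$ and the term $-(\gamma+\mu)\tfrac{2R-N}{N-R}$ is at most $2(\gamma+\mu)$, being negative for $R>N/2$). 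The only cosmetic slip is attributing the limit $(\gamma+\mu)-\beta N$ at $R\to0^{+}$ to the second term alone (it is the limit of the full expression $L_dV$), which does not affect the argument.
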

\begin{proof}
Let $R_0 \in (0,N)$. Consider $D := (0,N)$ and let $b(R) := \beta \dfrac{R}{1+\epsilon R}(N-R) - (\gamma + \mu)R$, for $R \in \mathbb{R}^{+}$. It is evident that $b$ is locally Lipschitz. Now, consider 
$V : D \to \mathbb{R}$ defined by: $V(R) := \dfrac{1}{R} + \dfrac{1}{N-R}$
and $D_n := \Big(\dfrac{1}{n}, N - \dfrac{1}{n}\Big)$, for $n \in \mathbb{N}$. It is clear that $V \in C^1( D; \mathbb{R}_{0}^{+})$ and that $\{D_n\}_{n \in \mathbb{N}} \subset D$, $D_n$ are domains, $\overline{D_n} \subset D$, $D_n \uparrow D$ such that $V_n := \displaystyle \inf_{R \notin D_n} {V(R)} \to \infty, \quad \text{as } n \to \infty.$ On the other hand, given $R \in D$, we have that:
\begin{equation*}
  \begin{aligned}
    L_d V(R) &= b(R) V'(R)\\
    &= \left(\beta \dfrac{R}{1+\epsilon R}(N-R) - (\gamma + \mu)R\right) \left(\dfrac{-1}{R^2} + \dfrac{1}{(N-R)^2}\right)\\
    &= \dfrac{-\beta (N-R)}{R(1+\epsilon R)} + \dfrac{(\gamma + \mu)}{R} + \dfrac{\beta R}{(N-R)(1+\epsilon R)} - \dfrac{(\gamma + \mu)R}{(N-R)^2}\\
    & \le \dfrac{(\gamma + \mu)}{R} + \dfrac{\beta N }{(N-R)}.\\
  \end{aligned}   
\end{equation*}

\noindent Let us take $c := \max \{\gamma + \mu, \beta N \}$. Thus,
$$L_d V(R) \leq c \left(\dfrac{1}{R} + \dfrac{1}{N-R}\right) = c V(R).$$

\noindent Then, there exists a unique solution $R(\cdot\ ; R_0)$ of (\ref{model-det}) in $\mathbb{R}_{0}^{+}$ such that $R(t;R_0) \in (0, N), \text{for all}\ t \in \mathbb{R}_{0}^{+}$. \qed 
\end{proof}
\vspace{0.3cm}
Now, we prove the asymptotic stability of the equilibrium points of the deterministic model \eqref{model-det} defined in \eqref{eq-det} (see definition \ref{deterministic_type}).
Intuitively, the parameter $\mathcal{K}_{0}^{d}$ acts as an indicator of the system's dynamics, as we will see in the following theorem.

\begin{Thm}[{\bf Asymptotic stability}]
If $\mathcal{K}_{0}^{d} < 1$, then $R = 0$ is an asymptotically stable equilibrium point of (\ref{model-det}). Moreover, if $\mathcal{K}_{0}^{d} > 1$, then $R = \xi_{d}$ is an asymptotically stable equilibrium point of (\ref{model-det}). 
\end{Thm}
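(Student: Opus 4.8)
The plan is to analyze the scalar ODE \eqref{model-det} directly through linearization at each equilibrium, supplemented by a sign analysis of the vector field $b(R) = \beta\frac{R}{1+\epsilon R}(N-R) - (\gamma+\mu)R$ on the invariant interval $(0,N)$ guaranteed by the previous theorem. First I would factor $b$ as $b(R) = R\left(\frac{\beta(N-R)}{1+\epsilon R} - (\gamma+\mu)\right)$, which makes the two equilibria $R=0$ and $R=\xi_d$ from \eqref{eq-det} transparent and also isolates the bracketed factor $h(R) := \frac{\beta(N-R)}{1+\epsilon R} - (\gamma+\mu)$, whose sign controls the flow. One checks that $h$ is strictly decreasing on $[0,N]$ (its numerator derivative is negative), with $h(0) = \beta N - (\gamma+\mu)$ and $h(N) = -(\gamma+\mu) < 0$; note $h(0)$ has the same sign as $\mathcal{K}_0^d - 1$ since $\mathcal{K}_0^d = \frac{\beta N}{\gamma+\mu}$.

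For the case $\mathcal{K}_0^d < 1$: here $h(0) < 0$, so by monotonicity $h(R) < 0$ for all $R \in [0,N]$, hence $b(R) = R\,h(R) < 0$ on $(0,N)$ and $\xi_d \le 0$ falls outside the biological interval. Thus $R(t;R_0)$ is strictly decreasing and bounded below by $0$, so it converges to the only equilibrium in $[0,N)$, namely $R=0$; for the "stability" part I would linearize, computing $b'(0) = \beta N - (\gamma+\mu) = (\gamma+\mu)(\mathcal{K}_0^d - 1) < 0$, which gives local asymptotic stability, and the monotone-convergence argument upgrades this to (global) asymptotic stability on $(0,N)$. For the case $\mathcal{K}_0^d > 1$: now $h(0) > 0 > h(N)$, so $h$ has a unique zero at $\xi_d \in (0,N)$, with $h > 0$ on $(0,\xi_d)$ and $h < 0$ on $(\xi_d, N)$; therefore $b > 0$ on $(0,\xi_d)$ and $b < 0$ on $(\xi_d,N)$, so every trajectory starting in $(0,N)$ is monotone toward $\xi_d$ and converges to it. Linearizing at $\xi_d$, one computes $b'(\xi_d) = \xi_d\, h'(\xi_d)$ (since $h(\xi_d)=0$), and because $h$ is strictly decreasing, $h'(\xi_d) < 0$, so $b'(\xi_d) < 0$, giving local — and again, via the monotonicity, global on $(0,N)$ — asymptotic stability.

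I do not anticipate a serious obstacle here, since this is a one-dimensional autonomous system where the phase-line/sign analysis is decisive; the only mild care needed is in the explicit differentiation of $b$ at the equilibria and in confirming the strict monotonicity of $h$ (equivalently that $\frac{d}{dR}\frac{N-R}{1+\epsilon R} = \frac{-(1+\epsilon N)}{(1+\epsilon R)^2} < 0$). If the intended notion of "asymptotically stable" in Definition~\ref{deterministic_type} is only local (Lyapunov stability plus local attractivity), the linearization step alone suffices and the monotonicity discussion becomes optional context; if it is global on $(0,N)$, the sign analysis of $b$ provides it. I would present the linearization computations cleanly and relegate the routine algebra of evaluating $b'$ to a single displayed line for each equilibrium.
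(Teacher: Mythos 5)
Your proposal is correct, and for the nontrivial equilibrium it takes a genuinely different route from the paper. For $R=0$ you do exactly what the paper does: compute $b'(0)=\beta N-(\gamma+\mu)<0$ and invoke linearization. For $R=\xi_{d}$, however, the paper constructs the Lyapunov function $V(R)=\frac{\xi_{d}}{N-\xi_{d}}\bigl(\frac{N}{R}-1\bigr)-1-\ln\bigl(\frac{\xi_{d}}{N-\xi_{d}}\bigl(\frac{N}{R}-1\bigr)\bigr)$ and shows $L_{d}V(R)<0$ for $R\neq\xi_{d}$, whereas you factor $b(R)=R\,h(R)$ with $h(R)=\frac{\beta(N-R)}{1+\epsilon R}-(\gamma+\mu)$, use the strict monotonicity of $h$ (your derivative computation $\frac{d}{dR}\frac{N-R}{1+\epsilon R}=\frac{-(1+\epsilon N)}{(1+\epsilon R)^{2}}$ is right) to locate $\xi_{d}$ as the unique zero of $h$ in $(0,N)$ when $\mathcal{K}_{0}^{d}>1$, and combine the phase-line sign analysis with $b'(\xi_{d})=\xi_{d}h'(\xi_{d})<0$. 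Both arguments are valid; the identification of the zero of $h$ with $\xi_{d}$ and the evaluation $b'(\xi_{d})=\xi_{d}h'(\xi_{d})$ check out. Your route is more elementary (no Lyapunov function needs to be guessed) and, as a byproduct, already yields global convergence on $(0,N)$ in both parameter regimes, which is essentially the content of the paper's later Extinction and Persistence theorems; its limitation is that the monotone phase-line argument is specific to one-dimensional autonomous dynamics. The paper's Lyapunov-function route buys something else: the same function $V$, together with the observation that $V\to\infty$ at the endpoints of $(0,N)$, is reused verbatim in the Persistence theorem via the Barbashin--Krasovskii theorem, so it is the building block the authors carry forward, and it is the style of argument that transfers to the stochastic and fractional versions where phase-line reasoning is unavailable.
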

\begin{proof}
First, let's see that $R = 0$ is an asymptotically stable equilibrium point of (\ref{model-det}). Consider $\delta > 0$ small enough and let $D = (-\delta, \delta)$. Note that $b(R) := \beta \dfrac{R}{1+\epsilon R}(N-R) - (\gamma + \mu)R$, for $R \in D$. It is clear that $b$ is differentiable and $b'(R) = \beta \dfrac{1}{(1+\epsilon R)^2}(N-R) - \beta \dfrac{R}{1+ \epsilon R} - (\gamma + \mu)$, for $R \in D$. Thus, $b'(0) = \beta N - (\gamma + \mu) < 0$. By the Lyapunov linearization theorem for the deterministic case, we have that $R = 0$ is an asymptotically stable equilibrium point of (\ref{model-det}).\\
Now, let's see that $R = \xi_{d}$ is an asymptotically stable equilibrium point of (\ref{model-det}). Let $D := (0, N)$ and consider $V: D \to \mathbb{R}$ defined by:
\[
V(R) := \dfrac{\xi_{d}}{N-\xi_{d}}\left(\dfrac{N}{R} - 1\right) - 1 - \ln\left(\dfrac{\xi_{d}}{N-\xi_{d}} \left(\dfrac{N}{R} - 1\right)\right).
\]
Clearly, $V \in C^1(D; \mathbb{R})$ and $V$ is positive definite at $R = \xi_{d}$. 
Also, given $R \in D$, we have:
\begin{align*}
    L_d V(R) & = b(R) V'(R) \\
    & = \left(\beta \dfrac{R}{1+ \epsilon R}(N-R) - (\gamma + \mu)R\right)\left(-\dfrac{N^2}{R^2}\dfrac{(\xi_{d} - R)}{(N - \xi_{d})(N - R)}\right)\\
    & = \left(\beta \dfrac{N-R}{1+\epsilon R} - (\gamma + \mu)\right)\left(-\dfrac{N^2}{R}\dfrac{(\xi_{d} - R)}{(N - \xi_{d})}\right).
\end{align*}
Thus, $L_d V(R) < 0$, for $R \neq \xi_{d}$.
Then, by the Lyapunov stability theorem for equilibrium points in the deterministic case, we have that $R = \xi_{d}$ is an asymptotically stable equilibrium point of (\ref{model-det}). \qed
\end{proof}
\vspace{0.3cm}
In the following theorem, we show that if the deterministic threshold condition $\mathcal{K}_{0}^{d} < 1$ is fulfilled, the population of resistant bacteria will ultimately die out. Of course, this result is stronger than the previous one when $\mathcal{K}_{0}^{d} < 1$
\begin{Thm}[{\bf Extinction}]
   If $\mathcal{K}_{0}^{d} < 1$, then for any $R_0 \in (0, N)$, we have
   $$\displaystyle\lim_{t \to \infty} R(t; R_0) = 0.$$
\end{Thm}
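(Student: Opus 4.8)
The plan is to exploit the invariance result (Theorem on invariance) together with a one-sided differential inequality and Gr\"onwall's lemma. First I would invoke the invariance theorem to guarantee that for the given $R_0 \in (0,N)$ there is a unique global solution $R(t;R_0)$ with $R(t;R_0) \in (0,N)$ for all $t \in \mathbb{R}_{0}^{+}$. This is the crucial input, because it both keeps the solution positive (so that the eventual limit can only be $0$) and bounds the factor $N-R$ from above by $N$ and the factor $1+\epsilon R$ from below by $1$.

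Next I would rewrite the equation in the form
\begin{equation*}
\frac{dR}{dt} = R\left(\beta\,\frac{N-R}{1+\epsilon R} - (\gamma+\mu)\right),
\end{equation*}
and estimate, for $R \in (0,N)$,
\begin{equation*}
\beta\,\frac{N-R}{1+\epsilon R} - (\gamma+\mu) \le \beta N - (\gamma+\mu) = (\gamma+\mu)\bigl(\mathcal{K}_{0}^{d} - 1\bigr) =: -k,
\end{equation*}
where $k > 0$ precisely because $\mathcal{K}_{0}^{d} < 1$. Hence $\dfrac{dR}{dt} \le -k\,R(t)$ for all $t \ge 0$.

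Then I would apply Gr\"onwall's inequality (or simply compare with the solution of $\dot y = -ky$) to conclude $0 < R(t;R_0) \le R_0 e^{-kt}$ for every $t \ge 0$. Letting $t \to \infty$ and squeezing between $0$ and $R_0 e^{-kt}$ yields $\lim_{t\to\infty} R(t;R_0) = 0$, which also confirms the remark that this statement is strictly stronger than local asymptotic stability since it holds for every initial datum in the whole invariant interval $(0,N)$.

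I do not expect a genuine obstacle here: the only point that needs care is making sure the differential inequality is applied on an interval where $R$ is known to remain in $(0,N)$, which is exactly what the invariance theorem supplies; without it one would have to argue separately that the solution does not leave $(0,N)$ before the decay estimate can be used.
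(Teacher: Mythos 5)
Your proposal is correct and is essentially the paper's argument in different clothing: the paper applies $V(R)=\ln R$ and bounds $L_dV(R) \le \beta N-(\gamma+\mu)<0$ before taking $\limsup_{t\to\infty}\ln(R(t))/t$, which is exactly the logarithm of your Gr\"onwall bound $R(t)\le R_0e^{-kt}$ with $k=(\gamma+\mu)(1-\mathcal{K}_0^d)$. Both proofs rest on the same key estimate $\beta\frac{N-R}{1+\epsilon R}-(\gamma+\mu)\le \beta N-(\gamma+\mu)$ on the invariant interval $(0,N)$, so there is no gap.
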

\begin{proof}
Let's show that
$$\displaystyle\limsup_{t \to \infty} \frac{\ln(R(t; R_{0}))}{t} < 0.$$
We already know that there exists a unique solution $R(\cdot; R_0)$ in $\mathbb{R}_{0}^{+}$ of (\ref{model-det}) such that $R(t; R_0) \in (0, N)$ for all $t \in \mathbb{R}_{0}^{+}$. Let us denote $R(\cdot; R_0)$ by $R$ and $D := (0, N)$.\\

Now, consider the function $V : D \to \mathbb{R}$ defined by $V(R) := \ln(R)$. It is clear that $V \in C^{1}(D; \mathbb{R})$.\\
Let $t > 0$. By the chain rule and integrating from 0 to $t$, we have:
$$\displaystyle\int_{0}^t \frac{dV(R)}{ds}(s) ds = \displaystyle\int_{0}^t L_{d} V(R(s)) ds.$$
In other words,
$$V(R(t)) - V(R(0)) = \displaystyle\int_{0}^t L_{d} V(R(s)) ds.$$
That is,
\begin{equation*}
    \begin{aligned}
    \ln(R(t)) &= \ln(R_0) + \displaystyle\int_{0}^t L_{d} V(R(s)) ds \\
    &= \ln(R_0) + \int_{0}^t \left(\beta \dfrac{R(s)}{1+\epsilon R(s)}(N - R(s)) - (\gamma + \mu)R(s)\right) \frac{1}{R(s)} ds \\
    &= \ln(R_0) + \int_{0}^t \left(\beta \frac{N - R(s)}{1 +\epsilon R(s)} - (\gamma + \mu)\right) ds \\
    &\leq \ln(R_0) + (\beta N - (\gamma + \mu))t.
    \end{aligned}
\end{equation*}

Then, dividing by $t$, we get
$$\frac{\ln(R(t))}{t} \leq \frac{\ln(R_0)}{t} + \beta N - (\gamma + \mu).$$
Therefore,
$$\limsup_{t \to \infty} \frac{\ln(R(t))}{t} \leq \beta N - (\gamma + \mu) < 0.$$
Thus,
$$\displaystyle\lim_{t \to \infty} R(t) = 0.$$ \qed
\end{proof}
In the following theorem, we state that when the deterministic threshold $\mathcal{K}_{0}^{d} > 1$, the population of resistant bacteria will persist over time. In other words, the system described by \eqref{model-det} converges to the non-trivial equilibrium point $\xi_{d}$ defined in \eqref{eq-det} as $t \to \infty$. This implies that $\xi_{d}$ is no longer an equilibrium point but rather an attractor for the system. This result is significant because, despite $\xi_{d}$ not being an equilibrium point, it continues to attract the system's trajectories as $t \to \infty$.
\begin{Thm}[{\bf Persistence}]
If $\mathcal{K}_{0}^{d} > 1$, then there exists $\xi_{d} \in (0, N)$ such that for any $R_0 \in (0, N)$,
\begin{equation}\label{lims}
   \displaystyle\lim_{t \to \infty} R(t; R_0) = \xi_{d},
\end{equation}
where $\xi_{d}$ is the equilibrium point defined in \eqref{eq-det}.
\end{Thm}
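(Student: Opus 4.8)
The plan is to upgrade the local asymptotic stability of $\xi_{d}$ obtained in the previous theorem to global convergence on $(0,N)$, by combining the Invariance Theorem with the same Lyapunov function $V$ used there and a LaSalle-type argument. First I would record the elementary facts: $\mathcal{K}_{0}^{d}>1$ is equivalent to $\beta N-\gamma-\mu>0$, so $\xi_{d}>0$; and $\xi_{d}<N$ follows from $\beta N-\gamma-\mu<\beta N<\beta N+\epsilon N(\gamma+\mu)$. Hence $\xi_{d}\in(0,N)$ and the statement is meaningful. Fix $R_{0}\in(0,N)$; by the Invariance Theorem there is a unique global solution $R(\cdot):=R(\cdot;R_{0})$ with $R(t)\in(0,N)$ for all $t\ge 0$. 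If $R_{0}=\xi_{d}$ then $R\equiv\xi_{d}$ and we are done, so assume $R_{0}\neq\xi_{d}$.

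Next, I would reuse $V:(0,N)\to\mathbb{R}$, $V(R)=\frac{\xi_{d}}{N-\xi_{d}}\bigl(\frac{N}{R}-1\bigr)-1-\ln\bigl(\frac{\xi_{d}}{N-\xi_{d}}\bigl(\frac{N}{R}-1\bigr)\bigr)$. Writing $x=x(R):=\frac{\xi_{d}}{N-\xi_{d}}\bigl(\frac{N}{R}-1\bigr)>0$, one has $V=x-1-\ln x\ge 0$ with equality iff $x=1$ iff $R=\xi_{d}$, and $V(R)\to\infty$ as $R\to 0^{+}$ or $R\to N^{-}$; hence every sublevel set $\{R\in(0,N):V(R)\le c\}$ is a compact subset of $(0,N)$. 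From the proof of the Asymptotic Stability Theorem we already know $L_{d}V(R)<0$ for $R\in(0,N)\setminus\{\xi_{d}\}$ and $L_{d}V(\xi_{d})=0$. Along the solution, $\frac{d}{dt}V(R(t))=V'(R(t))\,b(R(t))=L_{d}V(R(t))\le 0$, so $t\mapsto V(R(t))$ is non-increasing; thus $R(t)$ remains in the compact set $K:=\{V\le V(R_{0})\}\subset(0,N)$, and $V(R(t))$ decreases to some $\ell\ge 0$.

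Finally, by LaSalle's invariance principle the $\omega$-limit set $\omega(R_{0})$ is a nonempty, compact, connected, invariant subset of $K$ on which $V\equiv\ell$; invariance together with $V$ being constant on it forces $L_{d}V\equiv 0$ there, whence $\omega(R_{0})=\{\xi_{d}\}$, i.e. $R(t)\to\xi_{d}$. Since the equation is scalar, one may instead argue directly from the phase line: $b(R)=R\bigl(\beta\frac{N-R}{1+\epsilon R}-(\gamma+\mu)\bigr)$ and $R\mapsto\beta\frac{N-R}{1+\epsilon R}-(\gamma+\mu)$ is strictly decreasing on $(0,N)$ with unique zero $\xi_{d}$, so $b>0$ on $(0,\xi_{d})$ and $b<0$ on $(\xi_{d},N)$; by uniqueness $R(\cdot)$ cannot cross $\xi_{d}$, hence it is monotone and confined between $R_{0}$ and $\xi_{d}$, so it converges to a zero of $b$, which must be $\xi_{d}$. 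The only genuinely delicate point is ruling out that the trajectory accumulates on the boundary $\{0,N\}$; this is exactly what the radial unboundedness of $V$ on $(0,N)$, combined with the monotonicity of $V$ along solutions (or, in the scalar argument, the sign analysis of $b$ together with uniqueness), provides. After that, both closing arguments are routine.
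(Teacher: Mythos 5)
Your main argument is correct and is essentially the paper's proof: the same Lyapunov function $V(R)=\frac{\xi_{d}}{N-\xi_{d}}\bigl(\frac{N}{R}-1\bigr)-1-\ln\bigl(\frac{\xi_{d}}{N-\xi_{d}}\bigl(\frac{N}{R}-1\bigr)\bigr)$, the same three properties (positive definiteness at $\xi_{d}$, $L_{d}V<0$ off $\xi_{d}$, and $V\to\infty$ as $R\to 0^{+}$ or $R\to N^{-}$), the only difference being that the paper closes by citing the Barbashin--Krasovskii theorem (Theorem 4.2 in \cite{hassan2002nonlinear}) while you spell out the compact-sublevel-set plus LaSalle $\omega$-limit argument, which in this setting amounts to the same theorem. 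Your secondary, phase-line argument is a genuinely more elementary alternative worth noting: since the equation is scalar and $R\mapsto\beta\frac{N-R}{1+\epsilon R}-(\gamma+\mu)$ is strictly decreasing with unique zero $\xi_{d}$ (its derivative is $-\beta(1+\epsilon N)/(1+\epsilon R)^{2}<0$), the sign of $b$ on $(0,\xi_{d})$ and $(\xi_{d},N)$ plus uniqueness forces monotone, bounded trajectories converging to a zero of $b$ distinct from $0$, hence to $\xi_{d}$; this dispenses with Lyapunov machinery entirely, at the cost of being special to dimension one, whereas the paper's Lyapunov route is the one that generalizes (and parallels the stochastic and fractional sections).
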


\begin{proof}
Let $D := (0, N)$ and consider $V : D \to \mathbb{R}$ defined by:
\[
V(R) := \dfrac{\xi_{d}}{N - \xi_{d}}\left(\dfrac{N}{R} - 1\right) - 1 - \ln \left(\dfrac{\xi_{d}}{N - \xi_{d}} \left(\dfrac{N}{R} - 1\right)\right).
\]
Clearly, $V \in C^{1}(D; \mathbb{R})$, $V$ is positive definite at $R = \xi_{d}$, and $L_{d} V(R) < 0$ for $R \neq \xi_{d}$.
Additionally, we have:
$V(R) \to \infty$ as $R \to 0^{+}$ and as $R \to N^{-}$.
By the Barbashin-Krasovskii theorem (Theorem 4.2 in \cite{hassan2002nonlinear}), $R = \xi_{d}$ is a globally asymptotically stable equilibrium point of (\ref{model-det}), meaning that $\displaystyle\lim_{t \to \infty} R(t; R_0) = \xi_{d}$ for any $R_{0} \in (0, N)$. 
\qed
\end{proof}

\subsection{The stochastic approach}

In this section, we present a series of results involving the numerical value \( \mathcal{K}_0^s \). The proofs are direct modifications of the proofs in \cite{builes2024stochastic}. In \cite{builes2024stochastic}, a family of stochastic differential equations is considered, depending on a general function \( g(R) \) under certain conditions. In this article, we work with the function \( g(R) = \frac{R}{1 + \epsilon R} \) as a particular case of \cite{builes2024stochastic}. We have chosen to provide intuitive commentary on the results and to reference the work in \cite{builes2024stochastic} for the full proofs.

Now, in this section explores the qualitative properties of the stochastic model (\ref{model-stoc}). Let us note that $\beta \dfrac{R}{1+\epsilon R}(N - R) - (\gamma + \mu)R=0$ and $\sigma \dfrac{R}{1+\epsilon R}(N - R)=0$, the only equilibrium point of (\ref{model-stoc}) is $R=0.$ 


The following result is fundamental in the stochastic modelling of population systems, as it guarantees that, regardless of the initial condition \( R_0 \in (0, N) \), the stochastic system has a unique global solution that remains within the interval \( (0, N) \) over time. This implies that the model is robust to random fluctuations and ensures that the stochastic dynamics of the resistant bacterial population neither becomes extinct in finite time nor exceeds the maximum capacity \( N \). In other words, the system is able to realistically model a sustained population range in the presence of randomness.

\begin{Thm}[{\bf Invariance}]
 For any \( R_0 \in (0, N) \), there exists a unique global solution of the stochastic system (\ref{model-stoc}) that remains invariant in \( (0, N) \).
\end{Thm}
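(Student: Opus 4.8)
The plan is to reproduce, for the stochastic system, the Lyapunov--localization scheme used in the deterministic invariance theorem, now with the stochastic Lyapunov operator $L_s$ of Table~\ref{table2}; this is exactly the argument of \cite{builes2024stochastic} specialized to $g(R)=\frac{R}{1+\epsilon R}$. Fix $R_0\in(0,N)$, set $D:=(0,N)$, and write the drift $b(R):=\beta\frac{R}{1+\epsilon R}(N-R)-(\gamma+\mu)R$ and the diffusion coefficient $\varsigma(R):=\sigma\frac{R}{1+\epsilon R}(N-R)$. Both $b$ and $\varsigma$ are locally Lipschitz on $\mathbb{R}^{+}$, so by standard SDE theory there is a unique solution $R(\cdot\,;R_0)$ defined up to the exit time $\tau$ of $D$; the whole content of the theorem is that $\tau=\infty$ almost surely, i.e.\ the process never hits $0$ or $N$ and hence is global.

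For this I would reuse the Lyapunov function $V(R):=\frac{1}{R}+\frac{1}{N-R}$, which is $C^2$ on $D$, together with the exhausting domains $D_n:=(\frac1n,N-\frac1n)$, which satisfy $\overline{D_n}\subset D$, $D_n\uparrow D$ and $V_n:=\inf_{R\notin D_n}V(R)\to\infty$. One then computes
\[
L_s V(R)=b(R)V'(R)+\tfrac12\,\varsigma(R)^2 V''(R),
\]
with $V'(R)=-\frac{1}{R^2}+\frac{1}{(N-R)^2}$ and $V''(R)=\frac{2}{R^3}+\frac{2}{(N-R)^3}$. The first term is bounded exactly as in the deterministic proof, giving $b(R)V'(R)\le c_1 V(R)$ with $c_1:=\max\{\gamma+\mu,\beta N\}$. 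For the Itô correction,
\[
\tfrac12\,\varsigma(R)^2 V''(R)=\sigma^2\!\left(\frac{(N-R)^2}{(1+\epsilon R)^2\,R}+\frac{R^2}{(1+\epsilon R)^2\,(N-R)}\right)\le \sigma^2 N^2\!\left(\frac1R+\frac1{N-R}\right)=\sigma^2 N^2 V(R),
\]
using $1+\epsilon R\ge 1$ and $0<R<N$. Hence $L_s V(R)\le c\,V(R)$ on $D$ with $c:=c_1+\sigma^2 N^2$.

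With the bound $L_s V\le cV$ on $D$ and the stated properties of $\{D_n\}$, the Khasminskii-type non-explosion/invariance criterion (see \cite{khasminskii2011stochastic}, in the form used in \cite{builes2024stochastic}) gives $\tau=\infty$ a.s., so $R(t;R_0)\in(0,N)$ for all $t\in\mathbb{R}_0^{+}$ and the global solution is unique. The only place the stochastic case genuinely differs from the deterministic one is the term $\tfrac12\varsigma(R)^2 V''(R)$: since $V''$ blows up like $R^{-3}$ and $(N-R)^{-3}$ at the endpoints, the argument relies on the diffusion coefficient $\varsigma$ vanishing quadratically there (the factors $R$ and $N-R$), so that the product stays comparable to $V$; the displayed estimate is precisely this cancellation, and it is the main point to verify. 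Everything else is a routine transcription of the deterministic proof and of \cite{builes2024stochastic}.
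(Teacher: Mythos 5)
Your proposal is correct and follows essentially the same route as the paper: the paper simply defers to Theorem 2.1 of \cite{builes2024stochastic}, whose argument is exactly this Khasminskii-type scheme with $V(R)=\frac1R+\frac1{N-R}$, the exhausting domains $D_n$, and the bound $L_sV\le cV$ (the same construction the paper spells out for the deterministic case). Your computation of the It\^o correction, using the quadratic vanishing of the diffusion coefficient at $R=0$ and $R=N$ to get $\tfrac12\varsigma^2V''\le\sigma^2N^2V$, is exactly the needed new estimate and is correct.
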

\begin{proof}
See Theorem 2.1 in \cite{builes2024stochastic}.
\qed
\end{proof}
\vspace{0.3cm}
 The following result states that if \( \mathcal{K}_0^s < 1 \), then, starting near extinction ($R=0$), the resistant bacterial population converges to extinction with high probability (see definition \ref{type_stochastic}).

\begin{Thm}[{\bf Asymptotic stability}]
  If $\mathcal{K}_{0}^{s}<1$, then \( R=0 \) is an asymptotically stable equilibrium point in probability of (\ref{model-stoc}).
\end{Thm}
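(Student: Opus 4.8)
The plan is to build a local stochastic Lyapunov function for \eqref{model-stoc} and then invoke Khasminskii's criterion for asymptotic stability in probability \cite{khasminskii2011stochastic}, following the scheme of \cite{builes2024stochastic}. Since the drift and the diffusion coefficient of \eqref{model-stoc} both vanish at $R=0$, the natural candidate is $V(R):=R^{p}$ for a small exponent $p\in(0,1)$ to be fixed at the end: it is continuous on $[0,N)$, satisfies $V(0)=0$, is strictly positive on $(0,N)$, and is $C^{2}$ on every punctured interval $(0,\delta)$, which is all that is needed for the stability criterion. Taking a power strictly less than one is essential: the diffusion term enters $L_{s}V$ multiplied by the negative factor $p(p-1)$, so it reinforces the drift, and letting $p$ be small replaces the deterministic threshold $\mathcal{K}_{0}^{d}$ by the smaller stochastic threshold $\mathcal{K}_{0}^{s}$.

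The key computation is
\[
L_{s}V(R)=p\,R^{p}\left[\,\beta\,\frac{N-R}{1+\epsilon R}-(\gamma+\mu)+\frac{\sigma^{2}}{2}\,(p-1)\,\frac{(N-R)^{2}}{(1+\epsilon R)^{2}}\,\right].
\]
As $p\,R^{p}>0$ for $R>0$, the sign of $L_{s}V$ near the origin is dictated by the bracket, a rational function continuous up to $R=0$, where it equals $\beta N-(\gamma+\mu)-\tfrac12\sigma^{2}N^{2}+\tfrac12\sigma^{2}pN^{2}=(\gamma+\mu)\bigl(\mathcal{K}_{0}^{s}-1\bigr)+\tfrac12\sigma^{2}pN^{2}$. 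Because $\mathcal{K}_{0}^{s}<1$, one can fix $p\in(0,1)$ small enough that $\tfrac12\sigma^{2}pN^{2}<(\gamma+\mu)(1-\mathcal{K}_{0}^{s})$, i.e. $p<\dfrac{2(\gamma+\mu)(1-\mathcal{K}_{0}^{s})}{\sigma^{2}N^{2}}$; then the bracket is strictly negative at $R=0$ and, by continuity, on some interval $(0,\delta)$. Hence $L_{s}V(R)<0$ for all $R\in(0,\delta)$ while $L_{s}V(0)=0$, so $L_{s}V$ is negative definite on a neighborhood of the origin. (If $\sigma=0$ the equation is deterministic and the statement is already covered by the deterministic results above.)

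Finally, $V$ is positive definite, $C^{2}$ away from $0$, with $V(0)=0$, and $L_{s}V$ is negative definite near $R=0$; moreover the invariance theorem guarantees that a solution issued from $(0,N)$ stays in $(0,N)$, so it never leaves the domain of $V$. Khasminskii's theorem on stability in probability together with its asymptotic version (see \cite{khasminskii2011stochastic}, and \cite{builes2024stochastic} for the details in this exact setting) then yields that $R=0$ is an asymptotically stable equilibrium point in probability of \eqref{model-stoc}. I expect the only delicate point to be bookkeeping: pinning down the admissible range of $p$, checking that the bracketed term is genuinely continuous at $R=0$ (it is, its denominator being nonzero there), and quoting the precise hypotheses of the stochastic Lyapunov theorem; beyond the single estimate displayed above, no further calculation is required.
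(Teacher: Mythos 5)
Your proposal is correct, and it is worth noting that it does more than the paper does at this point: the paper's ``proof'' is only a pointer to Theorem 2.2 of \cite{builes2024stochastic}, where the result is established for a general saturation function $g$, whereas you give a self-contained argument specialized to $g(R)=R/(1+\epsilon R)$. Your computation of $L_sV$ for $V(R)=R^{p}$ is right, the value of the bracket at $R=0$ is indeed $(\gamma+\mu)(\mathcal{K}_{0}^{s}-1)+\tfrac12\sigma^{2}pN^{2}$, and the choice $0<p<2(\gamma+\mu)(1-\mathcal{K}_{0}^{s})/(\sigma^{2}N^{2})$ makes $L_sV(R)\le -c\,pR^{p}$ on a deleted right neighborhood of $0$, which is exactly the negative-definiteness needed for Khasminskii's criterion; this $R^{p}$ trick is the standard route for such stochastic epidemic thresholds and is almost certainly the same mechanism as in the cited general theorem, so what your version buys is transparency (one sees explicitly why the threshold drops from $\mathcal{K}_{0}^{d}$ to $\mathcal{K}_{0}^{s}$ as $p\to 0$), at the cost of losing the generality of \cite{builes2024stochastic}. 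The only point you should state more carefully is the domain issue: $R^{p}$ is not $C^{2}$ at $R=0$ and not even real-valued for $R<0$, while Khasminskii-type theorems are usually phrased on a full neighborhood of an interior equilibrium. This is harmless here for two reasons you should make explicit: the version of the stability theorem in \cite{khasminskii2011stochastic} only requires $V$ to be continuous and positive definite near the equilibrium and $C^{2}$ away from it (alternatively use $V(R)=|R|^{p}$), and the paper's definition of stability in probability (Definition \ref{type_stochastic}) only quantifies over initial data $x_{0}\in I=(0,N)$, so together with the invariance theorem the analysis may legitimately be confined to the one-sided neighborhood $(0,\delta)$. With that bookkeeping spelled out, your argument is a complete and correct proof.
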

\begin{proof}
  See Theorem 2.2 in \cite{builes2024stochastic}.
  \qed
\end{proof}
\vspace{0.3cm}
In the following theorem, we present sufficient conditions for the population of resistant bacteria to converge to extinction as \( t \to \infty \). One condition that seems natural after the previous results is that \( \mathcal{K}_{0}^{s} < 1 \); this ensures that if we start close to the equilibrium point \( R = 0 \), the system will converge to \( 0 \) with high probability. Another condition is needed to guarantee convergence to extinction starting from any point. This condition arises from the Lyapunov function used, so it may be subject to refinement. The condition is that \(\sigma^2 N \leq \beta \); this suggests a certain restriction on \( \beta \), in the sense that this parameter cannot be taken arbitrarily small.


\begin{Thm}[{\bf Extinction}]
  If \( \mathcal{K}_{0}^{s} < 1 \) and \(\sigma^2 N \leq \beta \), then for any \( R_0 \in (0, N) \), we have:
  \[
  \mathbb{P}\left(\lim_{t \to \infty} R(\cdot, t; R_{0}) = 0 \right) = 1.
  \]
\end{Thm}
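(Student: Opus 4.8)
The plan is to follow the classical route to almost-sure extinction for stochastic population models: apply It\^o's formula to $\ln R(t)$, bound the resulting drift by a strictly negative constant, and show the martingale part is negligible on the linear time scale. By the invariance theorem for (\ref{model-stoc}), for $R_0 \in (0,N)$ there is a unique global solution $R(\cdot;R_0)$ with $R(t;R_0) \in (0,N)$ for all $t\ge 0$; write $R=R(\cdot;R_0)$, so that $V(R)=\ln R$ is $C^2$ along the trajectory. It\^o's formula then gives
\begin{equation*}
\begin{aligned}
\ln R(t) = \ln R_0 &+ \int_0^t \left(\beta\frac{N-R(s)}{1+\epsilon R(s)} - (\gamma+\mu) - \frac{\sigma^2}{2}\frac{(N-R(s))^2}{(1+\epsilon R(s))^2}\right) ds \\
&+ \int_0^t \sigma\frac{N-R(s)}{1+\epsilon R(s)}\,dB(s).
\end{aligned}
\end{equation*}

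The core step is bounding the drift integrand. Put $x := \dfrac{N-R}{1+\epsilon R}$; then $x\in(0,N)$ whenever $R\in(0,N)$, since $0<N-R<N$ and $1+\epsilon R\ge 1$. The drift integrand equals $\phi(x)-(\gamma+\mu)$ with $\phi(x):=\beta x-\tfrac{\sigma^2}{2}x^2$. Because $\phi'(x)=\beta-\sigma^2 x\ge \beta-\sigma^2 N\ge 0$ on $[0,N]$ by the hypothesis $\sigma^2 N\le\beta$, the function $\phi$ is non-decreasing on $[0,N]$, so $\phi(x)\le\phi(N)=\beta N-\tfrac{\sigma^2 N^2}{2}$. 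Hence the drift integrand is at most $\beta N-\tfrac{\sigma^2 N^2}{2}-(\gamma+\mu)=(\gamma+\mu)(\mathcal{K}_0^s-1)=:-\lambda$, and $\lambda>0$ precisely because $\mathcal{K}_0^s<1$. This is exactly where the two hypotheses enter, and it is the crux of the proof: $\sigma^2 N\le\beta$ pushes the vertex of the concave parabola $\phi$ to the right of $(0,N)$, so its supremum over the feasible range is attained at the endpoint $x=N$, and $\mathcal{K}_0^s<1$ makes that endpoint value negative. (Without the first condition one would only control the drift near $R=0$, which is why the theorem needs it.)

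For the stochastic term, set $M(t):=\int_0^t \sigma\dfrac{N-R(s)}{1+\epsilon R(s)}\,dB(s)$, a continuous local martingale with $\langle M\rangle_t=\int_0^t \sigma^2\dfrac{(N-R(s))^2}{(1+\epsilon R(s))^2}\,ds\le \sigma^2 N^2\,t$. By the strong law of large numbers for continuous local martingales, $M(t)/t\to 0$ a.s. as $t\to\infty$: if $\langle M\rangle_\infty=\infty$ this follows from $M(t)/\langle M\rangle_t\to 0$ together with $\langle M\rangle_t/t$ being bounded; if $\langle M\rangle_\infty<\infty$ then $M(t)$ converges a.s. and the claim is immediate. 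Combining the drift bound with this,
\begin{equation*}
\frac{\ln R(t)}{t}\le \frac{\ln R_0}{t}-\lambda+\frac{M(t)}{t},
\end{equation*}
so $\limsup_{t\to\infty} t^{-1}\ln R(t)\le-\lambda<0$ almost surely, whence $R(t;R_0)\to 0$ a.s., i.e. $\mathbb{P}\!\left(\lim_{t\to\infty} R(\cdot,t;R_0)=0\right)=1$. The only points needing mild care are the martingale law of large numbers and the fact, supplied by the invariance theorem, that the solution never leaves $(0,N)$, so that $\ln R$ and the estimate $x\in(0,N)$ remain valid for all $s\ge 0$; everything else is routine stochastic calculus. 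Alternatively, one can cite Theorem~2.3 of \cite{builes2024stochastic}, of which this is the $g(R)=R/(1+\epsilon R)$ specialization.
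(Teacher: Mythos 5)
Your proposal is correct and follows essentially the same route as the paper: Itô's formula applied to $\ln R$, the key observation that $f(u)=\beta u-(\gamma+\mu)-\tfrac{1}{2}\sigma^2u^2$ is increasing on $[0,\beta/\sigma^2]$ so that $\sigma^2N\le\beta$ yields the drift bound $\beta N-(\gamma+\mu)-\tfrac{1}{2}\sigma^2N^2=(\gamma+\mu)(\mathcal{K}_0^s-1)<0$. The only difference is that you carry out the concluding step explicitly (the strong law of large numbers for the local martingale $M$ with $\langle M\rangle_t\le\sigma^2N^2t$), whereas the paper defers exactly that part to Theorem 2.3 of \cite{builes2024stochastic}.
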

 \begin{proof}
We already know that there exists a unique solution $R(\cdot,\cdot;R_0)$ in $\mathbb{R}_{0}^{+}$ of (\ref{model-stoc}) such that $R(\cdot,t;R_0) \in (0,N),$ for all $t \in \mathbb{R}_{0}^{+},$ $\mathbb{P}$-a.s. Let us denote $R(\cdot,\cdot;R_0)$ by $R$ and define $D := (0, N).$\\

Now, consider the functions $V: D \to \mathbb{R}$ defined by $V(R) := \ln(R)$ and $\varphi: D \to \mathbb{R}$ defined by $\varphi(R) := \dfrac{N - R}{1 + R}.$ It is clear that $V \in C^{2}(D; \mathbb{R})$ and $\varphi(R) \le N,$ for all $R \in (0, N).$\\

\noindent Let $t > 0.$ By Itô's formula, we have:
$$V(R(t)) - V(R(0)) = \int_{0}^{t} L_{s}V(R(s)) \, ds + \int_{0}^{t} V'(R(s)) \sigma(R(s)) \, dB(s), \quad \mathbb{P}\text{-a.s.}$$

\noindent In other words,
\begin{equation*}
    \begin{aligned}
        \ln(R(t)) &= \ln(R_0) + \int_{0}^{t} L_{s}V(R(s)) \, ds + \int_{0}^{t} V'(R(s)) \sigma(R(s)) \, dB(s)\\
        &= \ln(R_0) + \int_{0}^{t} \left( \beta \frac{N - R(s)}{1 + R(s)} - (\gamma + \mu) - \frac{1}{2} \frac{\sigma^{2} (N - R(s))^2}{(1 + R(s))^{2}} \right) ds\\
        &\quad + \int_{0}^{t} \sigma \frac{N - R(s)}{1 + R(s)} \, dB(s)\\
        &= \ln(R_0) + \int_{0}^{t} \beta \varphi(R(s)) - (\gamma + \mu) - \frac{1}{2} \sigma^2 \varphi^2(R(s)) \, ds + \int_{0}^{t} \sigma \varphi(R(s)) \, dB(s).
    \end{aligned}
\end{equation*}

Now, since $N \le \dfrac{\beta}{\sigma^2}$ and the function $f(u) := \beta u - (\gamma + \mu) - \dfrac{1}{2} \sigma^{2} u^2$ is increasing for $u \le \dfrac{\beta}{\sigma^2},$ then $f(\varphi(R(s))) \le f(N),$ for all $s \in [0, t].$ In other words,\\

\[
\beta \varphi(R(s)) - (\gamma + \mu) - \frac{1}{2} \sigma^2 \varphi^2(R(s)) \le \beta N - (\gamma + \mu) - \frac{1}{2} \sigma^2 N^2,
\]
for all $s \in [0, t].$
The rest of the proof is similar to the proof of Theorem 2.3 in \cite{builes2024stochastic}.

\qed
\end{proof}
\vspace{0.3cm}
Before stating the theorem on the persistence of the bacterial population, we would like to provide a brief intuition of the result. In this context, the quantity \( R(\cdot, t; R_0) \) represents the size of the resistant bacterial population over time \( t \), starting from an initial condition \( R_0 \). A key objective is to understand whether this population tends to persist over time or, on the contrary, tends to vanish.

The theorem below indicates that, under certain conditions on the system parameters (\( \mathcal{K}_{0}^{s} > 1 \) ), there exists a persistence level \( \xi_{s} \) such that the resistant bacterial population will repeatedly reach this level over time. In other words, despite the stochastic variability in the model, we can expect the population not only to remain above extinction thresholds but also to oscillate around \( \xi_s > 0 \). In summary, this implies a tendency towards persistence rather than extinction, providing a measure of stability for the resistant bacterial population under the effect of random factors.

The proof of the following theorem has subtle differences compared to the proof of Theorem 2.4 in \cite{builes2024stochastic}. We will highlight these differences in the demonstration, and when the tools are fully analogous, we will refer to \cite{builes2024stochastic}.

\begin{Thm}\label{persistence}
If \( \mathcal{K}_{0}^{s} > 1 \) 
    , then there exists \( \xi_{s} \in (0, N) \) such that for any \( R_0 \in (0, N) \), the following holds:
\begin{equation}\label{lims}
   \limsup_{t \to \infty} R(\cdot, t; R_0) \geq \xi_{s}, \quad \mathbb{P}-\text{a.s.}
\end{equation}
and
\begin{equation}\label{limi}
   \liminf_{t \to \infty} R(\cdot, t; R_0) \leq \xi_{s}, \quad \mathbb{P}-\text{a.s.}
\end{equation}
where \( \xi_{s} \) is given by
\[
\xi_{s} = \frac{\sigma^{2}N - \beta - \sqrt{\beta^2 - 2\sigma^2(\gamma + \mu)}}{\sigma^2 +\epsilon\beta +\epsilon \sqrt{\beta^2 - 2\sigma^2(\gamma + \mu)}}.
\]
In other words, the solution process \( R(\cdot,\cdot\ ; R_0) \) visits the level \( \xi_{s} \) infinitely often for any \( R_0 \in (0, N) \).
\end{Thm}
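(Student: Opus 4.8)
The plan is to work again with the logarithmic functional $V(R) := \ln R$, as in the Extinction theorem, but now to exploit the \emph{sign} of the drift of $\ln R(t)$ rather than a uniform bound on it. Write $\varphi(R) := \dfrac{N-R}{1+\epsilon R}$ and $f(u) := \beta u - (\gamma+\mu) - \dfrac{1}{2}\sigma^{2}u^{2}$, so that Itô's formula gives, $\mathbb{P}$-a.s.,
$$\ln R(t) = \ln R_{0} + \int_{0}^{t} f\big(\varphi(R(s))\big)\,ds + \int_{0}^{t} \sigma\,\varphi(R(s))\,dB(s).$$
First I would record the elementary facts that drive everything: $\varphi$ is a continuous, strictly decreasing bijection of $(0,N)$ onto $(0,N)$ with $\varphi(R)\le N$; and $f$ is a downward parabola, increasing on $(0,\beta/\sigma^{2})$, with $f(0) = -(\gamma+\mu) < 0$ and $f(N) = (\gamma+\mu)(\mathcal{K}_{0}^{s}-1)$. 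Hence $\mathcal{K}_{0}^{s} > 1$ forces $\beta^{2} - 2\sigma^{2}(\gamma+\mu) > 0$ and places exactly one root $u_{\ast} := \big(\beta - \sqrt{\beta^{2} - 2\sigma^{2}(\gamma+\mu)}\,\big)/\sigma^{2}$ of $f$ inside $(0,N)$, with $f < 0$ on $(0,u_{\ast})$ and $f > 0$ on $(u_{\ast},N)$. Setting $\xi_{s} := \varphi^{-1}(u_{\ast})$, a purely algebraic simplification identifies $\xi_{s}$ with the level displayed in the statement, and monotonicity of $\varphi$ then gives $f(\varphi(R)) > 0$ for $R\in(0,\xi_{s})$ and $f(\varphi(R)) < 0$ for $R\in(\xi_{s},N)$. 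Locating $\xi_{s}$ through this explicit $\varphi$ is the computation that genuinely departs from Theorem 2.4 of \cite{builes2024stochastic}, which works with an abstract $g$.

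For the $\limsup$ inequality I would argue by contradiction through deterministic thresholds: it suffices to show that for every rational $\xi' \in (0,\xi_{s})$ the event $A_{\xi'} := \{\limsup_{t\to\infty} R(t) < \xi'\}$ is null, since $\{\limsup_{t\to\infty} R(t) < \xi_{s}\}$ is the countable union of the $A_{\xi'}$. On $A_{\xi'}$ there is a random time $T$ with $R(t)\le\xi'$ for all $t\ge T$; since $R\mapsto f(\varphi(R))$ is continuous and strictly positive on the compact interval $[0,\xi']$ (its value at $R=0$ being $f(N) > 0$), it attains there a \emph{deterministic} positive minimum $\eta = \eta(\xi')$, whence $\int_{0}^{t} f(\varphi(R(s)))\,ds \ge C_{\omega} + \eta(t-T)$ for $t\ge T$. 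For the stochastic term $M(t) := \int_{0}^{t}\sigma\,\varphi(R(s))\,dB(s)$ one has $\langle M\rangle_{t} = \int_{0}^{t}\sigma^{2}\varphi(R(s))^{2}\,ds \le \sigma^{2}N^{2}t$, so the strong law of large numbers for local martingales gives $M(t)/t\to 0$ a.s.\ Dividing the displayed identity by $t$ and letting $t\to\infty$ then forces $\liminf_{t\to\infty}\ln R(t)/t\ge\eta>0$ on $A_{\xi'}$, so $R(t)\to\infty$ there, contradicting $R(t)\in(0,N)$; hence $\mathbb{P}(A_{\xi'}) = 0$.

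The $\liminf$ inequality follows by the mirror-image argument: for each rational $\xi'' \in (\xi_{s},N)$ one shows $\{\liminf_{t\to\infty} R(t) > \xi''\}$ is null, because on that event $R(t)\ge\xi''$ eventually, so $\varphi(R(t))\le\varphi(\xi'') < u_{\ast}$ and, $f$ being increasing and negative on $[0,u_{\ast})$, $f(\varphi(R(t)))$ is bounded above by the deterministic constant $f(\varphi(\xi'')) < 0$; the same quadratic-variation estimate then sends $\ln R(t)/t$ to a negative limit, forcing $R(t)\to 0$ and contradicting $\liminf_{t\to\infty} R(t) > \xi'' > 0$. Since both contradiction schemes, and the martingale strong law underlying them, are exactly the tools used for Theorem 2.4 in \cite{builes2024stochastic}, at that point I would refer the reader there for the remaining bookkeeping. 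The step I expect to require the most care --- and where the argument is most easily mishandled --- is precisely this reduction to deterministic thresholds $\xi',\xi''$ together with the extraction of a drift bound that does not secretly depend on $\omega$; the algebraic verification that $\varphi^{-1}(u_{\ast})$ coincides with the stated $\xi_{s}$ is routine but is where the new, model-specific computation sits.
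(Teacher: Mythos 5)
Your proposal is correct and follows essentially the same route as the paper: the same Lyapunov function $V(R)=\ln R$, the same reduction to $f(\varphi(R))$ with $\varphi(R)=\frac{N-R}{1+\epsilon R}$, the intermediate-value location of the unique zero $\xi_s=\varphi^{-1}(u_\ast)$ of $L_sV$ in $(0,N)$, and then the standard contradiction argument combined with the strong law of large numbers for the martingale term --- which is precisely the ``rest of the proof'' that the paper delegates to Theorem 2.4 of \cite{builes2024stochastic}. Your extraction of uniform drift bounds by continuity of $f\circ\varphi$ on the compact sets $[0,\xi']$ and $[\xi'',N]$ is a slightly cleaner substitute for the paper's two-case monotonicity analysis of $L_sV$, but it is not a different method.

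One caveat: your claim that the algebraic simplification of $\varphi^{-1}(u_\ast)$ ``identifies $\xi_s$ with the level displayed in the statement'' does not hold as written. Solving $\varphi(\xi_s)=u_\ast$ with $u_\ast=\bigl(\beta-\sqrt{\beta^2-2\sigma^2(\gamma+\mu)}\bigr)/\sigma^2$ gives
\[
\xi_s=\frac{\sigma^2 N-\beta+\sqrt{\beta^2-2\sigma^2(\gamma+\mu)}}{\sigma^2+\epsilon\beta-\epsilon\sqrt{\beta^2-2\sigma^2(\gamma+\mu)}},
\]
whereas the expression displayed in the theorem corresponds to the larger root $u^{\ast}>N$ and is in fact negative whenever $\mathcal{K}_{0}^{s}>1$ (check it with the paper's own parameter values). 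So the sign discrepancy is a defect of the stated formula, not of your construction; you should state the corrected closed form (as above) rather than assert agreement with the displayed one, since your argument proves the limsup/liminf claims for $\varphi^{-1}(u_\ast)$ and only for that level.
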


\begin{proof}
    Let $D:=(0,N)$, and consider $V: D\to \mathbb{R}$ defined by $V(R):=\ln(R).$ It is clear that $V\in C^{2}(D;\mathbb{R}).$ Note that the stochastic Lyapunov operator applied to $V$ is given by:
$$L_{s}V(R):=\dfrac{\beta (N-R)}{1+\epsilon R} -(\gamma+\mu)-\dfrac{1}{2}\dfrac{\sigma^{2}(N-R)^2}{(1+\epsilon R)^2},\ \ \text{for all}\ \ R\in D.$$
Rewriting, we have: 
$$L_{s}V(R)=\beta\varphi(R)-(\gamma+\mu)-\dfrac{1}{2}{\sigma^{2}}\varphi^{2}(R),\ \ \text{for all}\ \ R\in D.$$
Where $\varphi(R):=\dfrac{N-R}{1+\epsilon R}.$ 
Consider the quadratic function: $$f(u):=\beta u-(\gamma+\mu)-\dfrac{1}{2}{\sigma^{2}}u^2, \ \ \text{for all}\ \ u\in\mathbb{R}. $$
 Note that $f(0)=-(\gamma+\mu)$, and since $\mathcal{K}_{0}^{s}>1$, then $f(N)=\beta N -(\gamma+\mu)-\dfrac{1}{2}{\sigma^{2}}N^2>0.$ Therefore, there exists $\eta\in (0,N)$ such that $f(\eta)=0.$ Now, there exists a unique $\xi_{s}\in(0,N)$ such that $\varphi(\xi_{s})=\eta.$ Thus, $0=f(\eta)=f(\varphi(\xi_{s}))=L_{s}V(\xi_{s}).$\\
 
\textbf{Case 1.} If $\dfrac{\beta }{\sigma^{2}}<N.$ Since $\varphi'(R)<0,$ for all $R\in(0,N),$ there exists $m\in (0,N)$ such that:
\begin{enumerate}
 \item[-] $L_{s}V$  is increasing in $(0,m)$ and $L_{s}V(R)>0,$ for all $R\in(0,m).$
 \item[-] $L_{s}V$  is decreasing in $(m,\xi_{s})$ and $L_{s}V(R)>0,$ for all $R\in(m,\xi_{s}).$
 \item[-]  $L_{s}V$ is decreasing in $(\xi_{s},N)$ and $L_{s}V(R)<0,$ for all $R\in(\xi_{s},N).$
\end{enumerate}

Where $L_{s}V(m)$ is the maximum value of $L_{s}V$ in $(0,N).$\\

\textbf{Case 2.} If $\dfrac{\beta }{\sigma^{2}}\ge  N.$ Since $\varphi'(R)<0,$ for all $R\in(0,N),$ $L_{s}V(0^{+}):=\displaystyle\lim_{R\to 0^{+}}L_{s}V(R)=\beta N -(\gamma+\mu)-\dfrac{1}{2}{\sigma^{2}}N^2>0$ and $L_{s}V(N^{-}):=\displaystyle\lim_{R\to N^{-}} L_{s}V(R)=-(\gamma +\mu)<0,$ then: 
\begin{enumerate}
 \item[-] $L_{s}V$  is decreasing in $(0,\xi_{s})$ and $L_{s}V(R)>0,$ for all $R\in(0,\xi_{s}).$
 \item[-]  $L_{s}V$ is decreasing in $(\xi_{s},N)$ and $L_{s}V(R)<0,$ for all $R\in(\xi_{s},N).$
 
\end{enumerate}
The rest of the proof is similar to proof of Theorem 2.4 of \cite{builes2024stochastic}. 
\qed
\end{proof}
\vspace{0.3cm}
The proof of the above theorem can be found in \cite{builes2024stochastic}.
The following theorem establishes the stationary distribution of the solution $R$ of the stochastic system \eqref{model-stoc}. This allows us to ensure that the stochastic model exhibits stable and predictable behaviour over the long term under certain conditions. Therefore, as the system evolves, the distribution of resistant bacteria will stabilize to a unique stationary distribution. 


\begin{Thm}[{\bf Stationary distribution}]
    If $\mathcal{K}_{0}^{s}>1$ 
    , then $\mu_{\infty}(\cdot)$ is the unique stationary distribution associated with the solution $R$ of (\ref{model-stoc}).
\end{Thm}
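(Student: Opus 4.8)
The strategy is the classical Khasminskii approach to existence and uniqueness of a stationary distribution for a one-dimensional diffusion on the open interval $(0,N)$, combined with the Feller/non-attainability analysis of the boundary points that we already have from the Invariance theorem. The solution $R$ of (\ref{model-stoc}) is a time-homogeneous Markov process with continuous paths that, by the Invariance theorem, stays in $(0,N)$ for all time $\mathbb{P}$-a.s.; so it suffices to work on the state space $D=(0,N)$. First I would verify the non-degeneracy (uniform ellipticity on compact subsets): the diffusion coefficient is $\sigma^2 g(R)^2(N-R)^2$ with $g(R)=R/(1+\epsilon R)$, which is strictly positive and continuous on every $[a,b]\subset(0,N)$, so the process is a regular diffusion there and the theory of Khasminskii (as in \cite{khasminskii2011stochastic}) applies once we exhibit a suitable Lyapunov function.

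The core step is to build a $C^2$ Lyapunov function $W:D\to\mathbb{R}_{0}^{+}$ together with a bounded open set $U$ with $\overline{U}\subset D$ such that $L_s W(R)\le -1$ for all $R\in D\setminus U$ (Khasminskii's drift condition guaranteeing positive recurrence). The natural candidate is $W=V$, the same function used in the Persistence theorem, namely the convex-type combination $V(R)=\tfrac{\xi_s}{N-\xi_s}\big(\tfrac{N}{R}-1\big)-1-\ln\!\big(\tfrac{\xi_s}{N-\xi_s}(\tfrac{N}{R}-1)\big)$, or more simply $V(R)=\tfrac1R+\tfrac1{N-R}$ used in the Invariance proof, since $V\to\infty$ at both ends of $(0,N)$. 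Under the hypothesis $\mathcal{K}_0^s>1$ the point $\xi_s$ of Theorem \ref{persistence} is the ``center'' toward which $L_sV$ points: from the sign analysis of $L_sV$ carried out in the proof of Theorem \ref{persistence} ($L_sV>0$ to the left of $\xi_s$, $L_sV<0$ to the right, with $L_sV(\xi_s)=0$), one gets that outside a neighborhood $U=(\xi_s-\delta,\xi_s+\delta)$ the quantity $L_sV$ is bounded away from $0$ with the correct sign, while near the boundary the second-derivative (noise) term is controlled. Rescaling $V$ by a large constant then yields $L_sW\le -1$ off $U$. This, together with the regularity on compacts, gives existence of a stationary (invariant) probability measure $\mu_\infty$ via Khasminskii's theorem.

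For uniqueness, I would invoke the fact that a positive-recurrent regular one-dimensional diffusion has a \emph{unique} invariant probability measure, given explicitly (up to normalization) by the speed measure $m(dR)=\dfrac{dR}{\sigma^2 g(R)^2(N-R)^2\, s'(R)}$, where $s$ is the scale function; positive recurrence forces $m(D)<\infty$, and any two stationary distributions must both be proportional to $m$, hence equal after normalization. Alternatively, uniqueness follows from the strong Feller property plus irreducibility of the diffusion on $(0,N)$ (again a consequence of non-degeneracy), which makes the transition semigroup have a unique invariant measure by the Doob–Khasminskii theorem. Since the whole argument is a specialization of the framework in \cite{builes2024stochastic}, where the general function $g$ satisfying the same structural hypotheses is treated, the cleanest write-up is to check that $g(R)=R/(1+\epsilon R)$ meets those hypotheses and then cite the corresponding theorem there.

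The main obstacle I anticipate is the behavior of the Lyapunov drift $L_sW$ near the boundary point $R=0$ when $\beta/\sigma^2$ is small (Case 1 of Theorem \ref{persistence}), where $L_sV(R)$ is positive but the second-order term $\tfrac12\sigma^2 g(R)^2(N-R)^2 V''(R)$ competes with the first-order term as $R\to 0^+$; one must check that the combined expression still diverges to $+\infty$ (for $V=1/R+1/(N-R)$) or stays suitably large and of the right sign, so that the $\le -1$ estimate off a compact set genuinely holds. A careful asymptotic expansion of $L_sV$ as $R\to 0^+$ and as $R\to N^-$ — showing the dominant term has the correct sign — is the technical heart; everything else (regularity on compacts, the abstract recurrence-to-stationarity implication, uniqueness via the speed measure) is routine one-dimensional diffusion theory.
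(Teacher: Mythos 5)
The paper's own ``proof'' of this theorem is simply a citation of Theorem 2.5 in \cite{builes2024stochastic}, so your closing suggestion --- verify that $g(R)=R/(1+\epsilon R)$ meets the structural hypotheses of that framework and invoke the corresponding theorem --- is exactly what the authors do, and your overall Khasminskii strategy (nondegenerate regular diffusion on $(0,N)$, a Lyapunov drift condition for positive recurrence, then uniqueness via scale/speed measure or strong Feller plus irreducibility) is the standard route behind such a result.

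However, the concrete core step of your hands-on version has a gap. For positive recurrence you need $L_sW\le -1$ \emph{outside} a compact subset of $(0,N)$, i.e.\ $L_sW$ must be strongly negative near both endpoints; your hedge that the combined expression ``still diverges to $+\infty$'' near $R=0$ has the sign backwards for this criterion. Moreover, both Lyapunov candidates you propose fail near $0$ in part of the admissible parameter range: for $V(R)=\tfrac1R+\tfrac1{N-R}$ (and likewise for $\tfrac{\xi_s}{N-\xi_s}\bigl(\tfrac{N}{R}-1\bigr)-1-\ln(\cdot)$, which also behaves like $c/R$ near $0$), a short expansion gives $L_sV(R)\sim\bigl(\sigma^2N^2-(\beta N-\gamma-\mu)\bigr)/R$ as $R\to0^+$, which tends to $+\infty$ whenever $\tfrac12\sigma^2N^2<\beta N-(\gamma+\mu)<\sigma^2N^2$, a regime fully compatible with $\mathcal{K}_{0}^{s}>1$; so the drift condition cannot hold with these functions there. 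The standard fix is logarithmic: take $W(R)=-\ln R-\ln(N-R)$ (suitably shifted and rescaled). Then $L_sW(R)\to -\bigl(\beta N-(\gamma+\mu)-\tfrac12\sigma^2N^2\bigr)<0$ as $R\to0^+$ --- this is precisely where $\mathcal{K}_{0}^{s}>1$ enters, mirroring the sign analysis of $L_s\ln R$ at $0^{+}$ in Theorem \ref{persistence} --- while $L_sW(R)\to-\infty$ as $R\to N^-$, so $L_sW\le -c<0$ off a compact interval and Khasminskii's existence theorem applies; the uniqueness part of your plan is fine as stated.
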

\begin{proof}

    See Theorem 2.5 in \cite{builes2024stochastic} \qed.
\end{proof}
\vspace{0.3cm}
\subsection{The fractionary approach}
Finally, in this section explores the qualitative properties of the fractionary model (\ref{model-fra}). Let us note that if $\beta \dfrac{R}{1+\epsilon R}(N - R) - (\gamma + \mu)R=0$, the equilibrium points of (\ref{model-fra}) are 
\begin{equation}\label{eq-fra}
R=0 \quad \text{and} \quad R = \dfrac{\beta N - \gamma - \mu}{\beta + \epsilon(\gamma + \mu)}:=\xi_{f}.
\end{equation}
We start showing that $(0,\mathcal{K}_{0}^{f})$ is an invariant set. In this case the invariance is more restrictive, since the fractional model has more limitations.
\begin{Thm}[{\bf Invariance}]
    For any $R_0 \in (0,\mathcal{K}_{0}^{f})$, there exists a unique global solution of (\ref{model-fra}) invariant in $(0,\mathcal{K}_{0}^{f}).$
 \end{Thm}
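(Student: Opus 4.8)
The plan is to reproduce the three-step scheme of the deterministic invariance proof --- local existence and uniqueness, an a priori lower bound keeping $R$ bounded away from $0$, and an a priori upper bound keeping $R$ below $\mathcal{K}_{0}^{f}$ --- but with the tools transported to the Caputo setting: the fundamental identity $R(t)=R_{0}+\frac{1}{\Gamma(\alpha)}\int_{0}^{t}(t-s)^{\alpha-1}b(R(s))\,ds$, with $b(R):=\beta\frac{R}{1+\epsilon R}(N-R)-(\gamma+\mu)R$, replaces the chain rule, and a comparison principle for fractional differential inequalities replaces Gronwall's lemma. Since $b$ is locally Lipschitz on $\R$, the classical existence--uniqueness theory for Caputo equations (as used in \cite{Mau,Guo}) yields a unique solution $R(\cdot\,;R_{0})$ on a maximal interval $[0,t_{\max})$; once the bounds below confine $R$ to the bounded set $(0,\mathcal{K}_{0}^{f})$, no blow-up is possible, so $t_{\max}=\infty$.

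For the lower bound I would factor $b(R)=R\,h(R)$ with $h(R):=\frac{\beta(N-R)}{1+\epsilon R}-(\gamma+\mu)$, which is continuous and hence bounded below on the compact interval $[0,\mathcal{K}_{0}^{f}]$, say by $-k$ with $k>0$. As long as $R(t)\in[0,\mathcal{K}_{0}^{f}]$ this gives $\frac{d^{\alpha}R}{dt^{\alpha}}=R\,h(R)\ge -k\,R$, whence, by the fractional comparison principle, $R(t)\ge R_{0}\,E_{\alpha}(-k\,t^{\alpha})$, where $E_{\alpha}$ denotes the Mittag-Leffler function; since $E_{\alpha}(-k\,t^{\alpha})>0$ for every $t\ge 0$ when $\alpha\in(0,1)$, we conclude $R(t)>0$ on $[0,t_{\max})$ (running this on the first potential time $R$ would vanish and taking a limit). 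One should note that the naive argument ``$R$ first hits a boundary value, then look at the sign of the vector field there'' is inconclusive at the lower end because $b(0)=0$, so the Mittag-Leffler estimate is genuinely needed here.

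The upper bound is the step I expect to be the main obstacle. The natural route is the Caputo maximum-principle lemma: if $R$ first reaches the level $\mathcal{K}_{0}^{f}$ at a time $t^{*}<t_{\max}$, then $R$ attains its maximum over $[0,t^{*}]$ at $t^{*}$, and an integration by parts in the defining integral of the Caputo derivative gives $\frac{d^{\alpha}R}{dt^{\alpha}}(t^{*})\ge \frac{(t^{*})^{-\alpha}}{\Gamma(1-\alpha)}\bigl(\mathcal{K}_{0}^{f}-R_{0}\bigr)>0$; but $\frac{d^{\alpha}R}{dt^{\alpha}}(t^{*})=b(\mathcal{K}_{0}^{f})$, so a contradiction follows as soon as the inward-pointing condition $b(\mathcal{K}_{0}^{f})\le 0$ is available. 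Equivalently, one needs $\frac{\beta(N-\mathcal{K}_{0}^{f})}{1+\epsilon\mathcal{K}_{0}^{f}}\le\gamma+\mu$, i.e. that $\mathcal{K}_{0}^{f}$ lies at or above the level $R^{\sharp}:=\frac{\beta N}{\beta+\epsilon(\gamma+\mu)}$ beyond which $b$ is negative; after simplification this reduces to the parameter inequality $\beta\ge(\gamma+\mu)(1-\epsilon)$ (which is automatic when $\epsilon\ge 1$ or when $\beta\ge\gamma+\mu$). Verifying this --- and, if it is not unconditional, recording it explicitly as a hypothesis --- is the delicate point, and it is exactly the sense in which the fractional model is ``more restrictive'': unlike in the integer-order case, a Caputo logistic-type trajectory may overshoot the classical carrying capacity $N$, so invariance can only be asserted on a region where the drift $b$ points inward at the boundary, here $(0,\mathcal{K}_{0}^{f})$. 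With both a priori bounds in hand, $R(t)\in(0,\mathcal{K}_{0}^{f})$ throughout $[0,t_{\max})$, hence $t_{\max}=\infty$ and the interval $(0,\mathcal{K}_{0}^{f})$ is invariant.
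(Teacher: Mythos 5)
Your three-step architecture (local well-posedness, lower bound, upper bound) parallels the paper's, and your lower-bound step is a legitimate alternative to what the paper does: instead of factoring $b(R)=R\,h(R)$ and invoking a fractional comparison to get $R(t)\ge R_{0}E_{\alpha}(-kt^{\alpha})>0$, the paper proves positivity together with non-explosion via the Lyapunov function $V(R)=R-1-\ln R$, the estimate $L_{f}V(R)\le c:=\beta N+\beta+\gamma+\mu$ (Lemma 3.1 of \cite{vargasdeleon2015volterra}), a sequence of exit times $T_{n}$, fractional integration, and a contradiction as $n\to\infty$. Either route is fine for that part.

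The decisive divergence is the upper bound, and there your argument does not prove the theorem as stated. The paper never argues at a first hitting time of $\mathcal{K}_{0}^{f}$ and never uses an inward-pointing condition $b(\mathcal{K}_{0}^{f})\le 0$: it bounds the drift linearly, $\frac{d^{\alpha}R}{dt^{\alpha}}\le \beta N-(\gamma+\mu)R$, and compares with the linear Caputo equation, whose solution $(R_{0}-\mathcal{K}_{0}^{f})E_{\alpha}(-(\gamma+\mu)t^{\alpha})+\mathcal{K}_{0}^{f}$ stays below $\mathcal{K}_{0}^{f}$ since $0\le E_{\alpha}\le 1$ and $R_{0}<\mathcal{K}_{0}^{f}$; this is precisely why the level $\mathcal{K}_{0}^{f}=\beta N/(\gamma+\mu)$ appears. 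Your maximum-principle route instead requires $b(\mathcal{K}_{0}^{f})\le 0$, i.e.\ $\mathcal{K}_{0}^{f}\ge\xi_{f}$ (note the sign-change level of $b$ is $\xi_{f}=\frac{\beta N-\gamma-\mu}{\beta+\epsilon(\gamma+\mu)}$, not your $R^{\sharp}$, and your inequality $\beta\ge(\gamma+\mu)(1-\epsilon)$ is only sufficient), a hypothesis absent from the statement, so as written you obtain only a conditional version. That said, your worry is well founded: the paper's linear bound amounts to $\frac{R(N-R)}{1+\epsilon R}\le N$ for all $R>0$, which holds only under restrictive conditions (essentially $N(1-\epsilon)^{2}\le 4$) and fails for the paper's own parameter values, where in the persistence regime $\xi_{f}\approx 6.7\times 10^{5}\gg \mathcal{K}_{0}^{f}=5$, so the drift points outward at $R=\mathcal{K}_{0}^{f}$ and $(0,\mathcal{K}_{0}^{f})$ cannot actually be invariant; indeed the condition you isolate is essentially the hypothesis $\epsilon>\frac{\beta N-\gamma-\mu}{\beta N}-\frac{\beta}{\gamma+\mu}$ (equivalent to $\xi_{f}<\mathcal{K}_{0}^{f}$) that the paper itself imposes in its fractional persistence theorem. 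So to match the paper, replace the hitting-time step by the Mittag-Leffler comparison; to be fully rigorous, either that linear drift bound must be justified on the relevant range or the extra hypothesis you flag must be stated explicitly.
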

\begin{proof}

\noindent Let $R_0 \in \big(0, \mathcal{K}_{0}^{f}\big)$ and define $b(R):=\beta\dfrac{R}{1+ \epsilon R}(N-R)-(\gamma+\mu)R$, for $R \in \mathbb{R}^{+}.$\\
First, let us show that there exists a unique global positive solution of (\ref{model-fra}). Since $b$ is locally Lipschitz, there exists a unique local maximal solution $R(\cdot;R_{0},\alpha):[0, T_{f}) \to \mathbb{R}$, where $T_{f}$ is the fractional explosion time. Since $R_{0} \in \mathbb{R}^{+}$, there exists $n_0 \in \mathbb{N}$ such that $\dfrac{1}{n_{0}} < R_{0}.$\\
Now, we define the following sequence of stopping times.
 $$T_n := \inf\Big\{t \in [0,T_{f}): R(t;R_{0},\alpha) \notin \Big(\dfrac{1}{n},n\Big)\Big\},\ \text{for}\ n > n_0.$$
 It is clear that $\{T_{n}\}_{n \ge n_0}$ is an increasing sequence. Moreover,  $$T_{\infty} := \lim_{n\to\infty}T_n = \inf\Big\{t \in [0,T_{f}): R(t;R_{0},\alpha) \notin (0,\infty) \Big\}.$$
 Let us show that $T_{\infty} = \infty.$ Suppose that $T_{\infty} < \infty$, then there exists $T \in \mathbb{R}^{+}$ such that $T_{\infty} \le T.$ Then, there exists $n_1 \in \mathbb{N},$ with $n_1 > n_0$ such that $T_{n} \le T.$
On the other hand, consider the function $V: \mathbb{R}^{+} \to \mathbb{R}$ defined by: 
$$V(R):=R-1-\ln(R).$$

\noindent Given $R \in \mathbb{R}^{+},$ by property lemma 3.1 of \cite{vargasdeleon2015volterra}, we have:

\begin{equation*}
\begin{aligned}
  L_{f}V(R) &\le \Big(1 - \frac{1}{R}\Big)\frac{d^{\alpha}R}{dt^{\alpha}}\\
  &=\Big(1 - \frac{1}{R}\Big)\Big(\beta \frac{R}{1+\epsilon R}(N-R) - (\gamma+\mu)R\Big)\\
   &=\beta (N-R) - (\gamma+\mu)R - \frac{\beta (N-R)}{1+\epsilon R} + (\gamma+\mu)\\
   &\le \beta N + \beta + \gamma + \mu.
\end{aligned}
\end{equation*}

\noindent Let $c := \beta N + \beta + \gamma + \mu.$ Thus, for all $R \in \mathbb{R}^{+}$, $L_{f}V(R) \le c.$ Applying the fractional integral to both sides from 0 to $T \wedge T_n$, for $n \in \mathbb{N}$, with $n > n_0$, we have:
\begin{equation*}
 V(R(T \wedge T_n)) \le V(R_0) + \frac{cT^{\alpha}}{\Gamma(\alpha+1)}.
\end{equation*}
Now, for all $n \in \mathbb{N}, n > n_1,$ we have $T \wedge T_n = T_n$ and $V(R(T \wedge T_n)) = V(R(T_n)).$\\ Note that:
\begin{equation*}
V(R(T_n)) = \Big(\frac{1}{n} - 1 - \ln\Big(\frac{1}{n}\Big)\Big) \wedge \Big(n - 1 - \ln(n)\Big),
\end{equation*}
hence, 
\begin{equation*}
\Big(\frac{1}{n} - 1 - \ln\Big(\frac{1}{n}\Big)\Big) \wedge \Big(n - 1 - \ln(n)\Big) \le V(R_0) + \frac{cT^{\alpha}}{\Gamma(\alpha+1)}.
\end{equation*}

\noindent Then, as $n \to \infty$, we have:
\begin{equation*}
V(R_0) + \dfrac{cT^{\alpha}}{\Gamma(\alpha+1)} = \infty.
\end{equation*}

\noindent This is a contradiction, since the expression $V(R_0) + \dfrac{cT^{\alpha}}{\Gamma(\alpha+1)}$ is finite.\\
Now, let us show that $R(t;R_{0},\alpha) \in (0,N),$ for all $t \in \mathbb{R}_{0}^{+}.$ Let us denote this solution by \( R \).\\
Let $t \in \mathbb{R}^{+}.$ Note that:
 \begin{equation*}
\begin{aligned}
  \frac{d^{\alpha}R}{dt^{\alpha}}(t) &= 
  \beta \frac{R(t)}{1+\epsilon R(t)}(N-R(t)) - (\gamma+\mu)R(t)\\
   &\le \beta N - (\gamma+\mu)R(t)
\end{aligned}
\end{equation*}
Solving the inequality, we have:
\begin{equation*}
\begin{aligned}
  R(t) &\le \Big(R_{0} - \mathcal{K}_{0}^{f}\Big) E_{\alpha}(-(\gamma+\mu)t^{\alpha}) + \mathcal{K}_{0}^{f},\\
\end{aligned}
\end{equation*}
where $E_{\alpha}$ is the Mittag-Leffler function of parameter $\alpha$ defined in \cite{Kilbas2006}. Since $0 \le E_{\alpha}(-(\gamma+\mu)t^{\alpha}) \le 1$ and $R_{0} - \mathcal{K}_{0}^{f} < 0,$ then 
\begin{equation*}
\begin{aligned}
  R(t) < \mathcal{K}_{0}^{f}.\\
\end{aligned}
\end{equation*}
\end{proof}

 \begin{Thm}[{\bf Asymptotic stability}]
If $\mathcal{K}_{0}^{f} < 1$, then $R = 0$ is an asymptotically stable equilibrium point of (\ref{model-fra}). Moreover, if $\mathcal{K}_{0}^{f} > 1$, then $R = \xi_{f}$ is an asymptotically stable equilibrium point of (\ref{model-fra}). 
\end{Thm}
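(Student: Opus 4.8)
The plan is to follow the same two‑pronged strategy used in the deterministic case, replacing the classical Lyapunov tools by their fractional counterparts. The key device is the linearization theorem for Caputo fractional systems (Matignon's criterion): for $\dfrac{d^{\alpha}R}{dt^{\alpha}}=b(R)$ with equilibrium $R^{*}$ and $\alpha\in(0,1)$, if every eigenvalue $\lambda$ of the Jacobian at $R^{*}$ satisfies $|\arg(\lambda)|>\alpha\pi/2$, then $R^{*}$ is locally asymptotically stable. Since the model is scalar, the Jacobian at $R^{*}$ is just the number $b'(R^{*})$, and the criterion reduces to $b'(R^{*})<0$ — a negative real number has argument $\pi>\alpha\pi/2$ for every admissible $\alpha$.

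For $R=0$ I would reuse the derivative already computed in the deterministic proof, $b'(R)=\beta(N-R)/(1+\epsilon R)^{2}-\beta R/(1+\epsilon R)-(\gamma+\mu)$, so that $b'(0)=\beta N-(\gamma+\mu)$. The hypothesis $\mathcal{K}_{0}^{f}<1$ is exactly $\beta N<\gamma+\mu$, hence $b'(0)<0$ and the fractional linearization criterion gives asymptotic stability of $R=0$.

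For $R=\xi_{f}$ I would factor $b(R)=R\,h(R)$ with $h(R):=\beta(N-R)/(1+\epsilon R)-(\gamma+\mu)$; by the definition of $\xi_{f}$ in \eqref{eq-fra} one has $h(\xi_{f})=0$, hence $b'(\xi_{f})=\xi_{f}\,h'(\xi_{f})$. A one‑line computation gives $h'(R)=-\beta(1+\epsilon N)/(1+\epsilon R)^{2}<0$, and since $\mathcal{K}_{0}^{f}>1$ forces $\xi_{f}\in(0,N)$, we obtain $b'(\xi_{f})<0$ and the criterion again applies. Alternatively, and more in line with the invariance proof, one may take the Volterra‑type function $V(R):=R-\xi_{f}-\xi_{f}\ln(R/\xi_{f})$, use Lemma~3.1 of \cite{vargasdeleon2015volterra} to bound $L_{f}V(R)\le(1-\xi_{f}/R)\,b(R)=(R-\xi_{f})h(R)$, and observe that the monotonicity of $h$ together with $h(\xi_{f})=0$ makes this strictly negative for $R\neq\xi_{f}$, so the fractional Lyapunov stability theorem concludes.

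I expect the main obstacle not to be the (routine) differentiation but the bookkeeping around the fractional stability statement itself: one must make sure that ``asymptotically stable'' here is precisely the notion for which Matignon's criterion or the fractional Lyapunov theorem yields a conclusion, and that a local solution near $\xi_{f}$ genuinely exists and stays positive so that linearizing there is meaningful. The preceding invariance theorem only covers initial data in $(0,\mathcal{K}_{0}^{f})$, and $\xi_{f}$ need not lie in that interval, so a short separate remark (local existence from the local Lipschitz property of $b$, together with positivity of the local solution) should be inserted before invoking the stability theorem at $\xi_{f}$.
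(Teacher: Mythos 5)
Your proposal matches the paper's proof: both parts rest on the fractional (Matignon) linearization criterion, checking $b'(0)=\beta N-(\gamma+\mu)<0$ when $\mathcal{K}_{0}^{f}<1$ and $b'(\xi_{f})<0$ when $\mathcal{K}_{0}^{f}>1$, exactly as the paper does. Your factorization $b(R)=R\,h(R)$ is a nice touch, since it makes explicit the sign of $b'(\xi_{f})=\xi_{f}h'(\xi_{f})$, which the paper merely asserts; the extra remarks on local existence and the Lyapunov alternative are optional, as the statement is purely local.
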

\begin{proof}
 First, let us show that $R=0$ is an asymptotically stable equilibrium point of (\ref{model-fra}). Consider $\delta > 0$ sufficiently small and let $D = (-\delta, \delta).$ Note that $b(R) := \beta \dfrac{R}{1+ \epsilon R}(N-R) - (\gamma + \mu)R,$ for $R \in D.$ It is clear that $b$ is differentiable and $b'(R) = \beta \dfrac{1}{(1+\epsilon R)^{2}} (N-R) - \beta \dfrac{R}{1+ \epsilon R} - (\gamma + \mu),$ for $R \in D.$ Thus, $b'(0) = \beta N - (\gamma + \mu) < 0.$ By the fractional Lyapunov linearization theorem (Theorem 2 in \cite{matignon1996stability}), we have that $R=0$ is an asymptotically stable equilibrium point of (\ref{model-fra}).\\

Now, let us show that $R=\xi_{f}$ is an asymptotically stable equilibrium point of (\ref{model-fra}). Consider $\delta > 0$ sufficiently small and let $D = (\xi_{f} - \delta, \xi_{f} + \delta).$ Note that $b(R) := \beta \dfrac{R}{1+ \epsilon R}(N-R) - (\gamma + \mu)R,$ for $R \in D.$ It is clear that $b$ is differentiable and $b'(R) = \beta \dfrac{1}{(1+\epsilon R)^{2}} (N-R) - \beta \dfrac{R}{1+ \epsilon R} - (\gamma + \mu),$ for $R \in D.$ Thus, $b'(\xi_{f}) = \beta \dfrac{1}{(1+\epsilon\xi_{f})^{2}} (N-\xi_{f}) - \beta \dfrac{\xi_{f}}{1+\epsilon\xi_{f}} - (\gamma + \mu) < 0.$ By the fractional Lyapunov linearization theorem  (Theorem 2 in \cite{matignon1996stability}), we have that $R=\xi_{f}$ is an asymptotically stable equilibrium point of (\ref{model-fra}).
  
\end{proof}

\begin{Thm}[{\bf Extinction}]
   If $\mathcal{K}_{0}^{f} < 1$, then for any $R_0 \in (0,\mathcal{K}_{0}^{f})$, we have
   $$\displaystyle\lim_{t \to \infty} R(t; R_0,\alpha) = 0.$$
\end{Thm}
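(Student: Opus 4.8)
The plan is to show that the Mittag-Leffler-type upper bound already obtained in the proof of the fractional invariance theorem combines with a matching differential inequality to squeeze $R(t;R_0,\alpha)$ to zero. First I would recall that for $R_0\in(0,\mathcal{K}_0^f)$ the invariance theorem guarantees a unique global solution $R(\cdot;R_0,\alpha)$ taking values in $(0,\mathcal{K}_0^f)\subset(0,N)$. The key observation is that on this interval the logistic-type term is subcritical: since $R(t)<\mathcal{K}_0^f<1$, i.e. $\beta N<\gamma+\mu$, we can bound the Caputo derivative from above. Indeed, dropping the $\tfrac{1}{1+\epsilon R}$ factor (which lies in $(0,1)$) in the positive part and keeping the full linear death term,
\begin{equation*}
\frac{d^{\alpha}R}{dt^{\alpha}}(t)=\beta\frac{R(t)}{1+\epsilon R(t)}(N-R(t))-(\gamma+\mu)R(t)\le \beta N R(t)-(\gamma+\mu)R(t)=-(\gamma+\mu-\beta N)R(t),
\end{equation*}
and $\lambda:=\gamma+\mu-\beta N>0$ precisely because $\mathcal{K}_0^f<1$.

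Next I would invoke the fractional comparison principle (as in \cite{vargasdeleon2015volterra} together with the Mittag-Leffler solution formula already used above): the solution of $\frac{d^\alpha y}{dt^\alpha}=-\lambda y$, $y(0)=R_0$, is $y(t)=R_0 E_\alpha(-\lambda t^\alpha)$, so $0<R(t;R_0,\alpha)\le R_0 E_\alpha(-\lambda t^\alpha)$ for all $t\ge 0$. Then I would use the classical asymptotic decay of the Mittag-Leffler function for negative real argument, namely $E_\alpha(-\lambda t^\alpha)\to 0$ as $t\to\infty$ (with the algebraic rate $E_\alpha(-\lambda t^\alpha)\sim \frac{1}{\lambda\Gamma(1-\alpha)t^\alpha}$, though only the limit is needed), which is a standard fact from \cite{Kilbas2006}. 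Sandwiching between $0$ and $R_0 E_\alpha(-\lambda t^\alpha)$ yields $\lim_{t\to\infty}R(t;R_0,\alpha)=0$.

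The only delicate point is justifying the fractional comparison step rigorously: for Caputo equations one must ensure the relevant functions are continuous and the inequality $\frac{d^\alpha R}{dt^\alpha}\le -\lambda R$ holds pointwise with $R$ nonnegative, so that the standard fractional Gronwall/comparison lemma applies; this is exactly the setting in which the invariance proof already operated, so no new machinery is needed. A cleaner alternative, avoiding an explicit comparison lemma, is to set $u(t):=R(t;R_0,\alpha)$ and observe directly that $u$ satisfies $\frac{d^\alpha u}{dt^\alpha}+\lambda u=-h(t)$ with $h(t):=\big(\gamma+\mu-\beta N\big)u(t)-\big(\beta N-\beta\frac{N-u(t)}{1+\epsilon u(t)}\big)u(t)\ge 0$; solving by variation of parameters with the Mittag-Leffler kernel gives $u(t)=R_0 E_\alpha(-\lambda t^\alpha)-\int_0^t (t-s)^{\alpha-1}E_{\alpha,\alpha}(-\lambda(t-s)^\alpha)h(s)\,ds\le R_0 E_\alpha(-\lambda t^\alpha)$, since the kernel $(t-s)^{\alpha-1}E_{\alpha,\alpha}(-\lambda(t-s)^\alpha)$ is nonnegative and $h\ge 0$. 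Either route terminates with the Mittag-Leffler decay, so the main obstacle is purely the bookkeeping of which comparison/representation result to cite, not any genuine analytic difficulty.
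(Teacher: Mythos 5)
Your argument is correct, but it takes a genuinely different route from the paper's. The paper proves extinction with a Lyapunov--LaSalle argument: it takes $V(R)=\tfrac{R^{2}}{2}$ on $[0,\mathcal{K}_{0}^{f}]$, bounds $L_{f}V$ via the Volterra-type lemma of \cite{vargasdeleon2015volterra}, factors the drift as $-R\,\tfrac{\beta+\epsilon(\gamma+\mu)}{1+\epsilon R}\,(|\xi_{f}|+R)\le 0$ (using that $\xi_{f}<0$ when $\mathcal{K}_{0}^{f}<1$), and then invokes the fractional LaSalle theorem (Lemma 4.6 in \cite{huo2015effect}) to conclude that $R=0$ is globally asymptotically stable on that domain. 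You instead derive the linear Caputo differential inequality $\tfrac{d^{\alpha}R}{dt^{\alpha}}\le -(\gamma+\mu-\beta N)R$ and close with the fractional comparison principle plus the Mittag-Leffler decay $E_{\alpha}(-\lambda t^{\alpha})\to 0$ from \cite{Kilbas2006}; this reuses exactly the machinery the paper already employs in its invariance proof, avoids the LaSalle-type result altogether, and yields the explicit algebraic rate $R(t)\le R_{0}E_{\alpha}(-\lambda t^{\alpha})=O(t^{-\alpha})$, whereas the paper's route additionally delivers Lyapunov stability of the origin as a by-product. Two small points: the inequality $\tfrac{d^{\alpha}R}{dt^{\alpha}}\le(\beta N-\gamma-\mu)R$ needs only $R>0$ (so that $\tfrac{1}{1+\epsilon R}\le 1$ and $N-R\le N$), not the condition $R<\mathcal{K}_{0}^{f}<1$ your phrasing suggests—positivity is exactly what the invariance theorem supplies; and in your ``cleaner alternative'' the forcing term should be $h(t)=\beta u(t)\left(N-\dfrac{N-u(t)}{1+\epsilon u(t)}\right)\ge 0$, since with the extra summand $(\gamma+\mu-\beta N)u(t)$ the identity $\tfrac{d^{\alpha}u}{dt^{\alpha}}+\lambda u=-h$ fails; this is a bookkeeping slip only and does not affect your main comparison-based argument.
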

\begin{proof}

\noindent Let $D := [0, \mathcal{K}_{0}^{f}]$ and consider $V: D \to \mathbb{R}$ defined by:
\[
 V(R) := \frac{R^{2}}{2},\ \text{for}\ R \in D.
\]
Clearly, $V \in C^{1}(D; \mathbb{R})$ and $V$ are positive definite at $R = 0$. 
Also, given $R \in D$, using the Lemma 2.1 of \cite{vargasdeleon2015volterra} we have:
\begin{align*}
    L_{f}V(R) & \le  
  \frac{d^{\alpha}R}{dt^{\alpha}}\\
  &= \beta \frac{R}{1+\epsilon R}(N-R) - (\gamma + \mu)R\\
    & = R \Big(\dfrac{\beta (N-R)}{1+\epsilon R} - (\gamma + \mu)\Big) \\
     & = R \frac{(\beta+\epsilon(\gamma + \mu) )}{1+\epsilon R} (\xi_{f} - R) \\
     & = -R \frac{(\beta+ \epsilon(\gamma + \mu) )}{1+\epsilon R}(|\xi_{f}| + R). \\
\end{align*}
\noindent Thus, $L_{f}V(R) \le 0$ for all $R \in D,$ and moreover, $$\mathcal{A} := \{R \in D : L_{f}V(R) = 0\} = \{0\}.$$
Then, by the fractional LaSalle theorem (Lemma 4.6 in \cite{huo2015effect}), we have that $R = 0$ is a globally asymptotically stable equilibrium point of (\ref{model-fra}).  Then,  $\displaystyle\lim_{t \to \infty} R(t; R_0,\alpha) = 0 $ for any $R_{0} \in (0, \mathcal{K}_{0}^{f})$.\\

\end{proof}
\begin{Thm}[{\bf Persistence}]
If $\mathcal{K}_{0}^{f} > 1$ and $\dfrac{\beta N-\gamma-\mu}{\beta N}-\dfrac{\beta}{\gamma+\mu}<\epsilon<1$, then there exists $\xi_{f} \in (0,\mathcal{K}_{0}^{f})$ such that for any $R_0 \in (0,\mathcal{K}_{0}^{f})$,
\begin{equation}\label{lims}
   \displaystyle\lim_{t \to \infty} R(t; R_0,\alpha) = \xi_{f},
\end{equation}
where $\xi_{f}$ is the equilibrium point defined in \eqref{eq-det}.
\end{Thm}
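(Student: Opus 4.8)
The plan is to adapt the argument used in the fractional Extinction theorem, replacing the Lyapunov function $V(R)=R^{2}/2$ by a Volterra-type function centred at $\xi_{f}$, and then to invoke the fractional LaSalle invariance principle (Lemma 4.6 in \cite{huo2015effect}) to turn the (semi)negativity of $L_{f}V$ into global convergence to $\xi_{f}$. Before anything else I would check that the two hypotheses do exactly what is needed: $\mathcal{K}_{0}^{f}>1$ forces $\beta N-\gamma-\mu>0$, hence $\xi_{f}>0$, while a direct computation shows that the inequality $\dfrac{\beta N-\gamma-\mu}{\beta N}-\dfrac{\beta}{\gamma+\mu}<\epsilon$ is equivalent to $\xi_{f}<\mathcal{K}_{0}^{f}$; the bound $\epsilon<1$ is then used to keep $\xi_{f}$ strictly inside $(0,\mathcal{K}_{0}^{f})$ and to preserve the sign pattern of $b$ on that interval. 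Consequently $\xi_{f}\in(0,\mathcal{K}_{0}^{f})$, and by the Invariance theorem for (\ref{model-fra}) every solution with $R_{0}\in(0,\mathcal{K}_{0}^{f})$ stays in this set for all $t$.

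Next I would take $D:=(0,\mathcal{K}_{0}^{f})$ and introduce
\[
V(R):=R-\xi_{f}-\xi_{f}\ln\!\left(\frac{R}{\xi_{f}}\right),\qquad R\in D .
\]
One checks immediately that $V\in C^{1}(D;\R)$, that $V$ is positive definite at $R=\xi_{f}$ (because $u-1-\ln u\ge 0$ with equality only at $u=1$), and that $V(R)\to\infty$ as $R\to 0^{+}$, which is what allows us to discard the left endpoint of the open set $D$. Applying the fractional comparison inequality of Lemma 3.1 in \cite{vargasdeleon2015volterra} exactly as in the Extinction proof gives
\[
L_{f}V(R)\le\left(1-\frac{\xi_{f}}{R}\right)\frac{d^{\alpha}R}{dt^{\alpha}}=\left(1-\frac{\xi_{f}}{R}\right)b(R),
\]
and then, reusing the factorisation $b(R)=R\,\dfrac{\beta+\epsilon(\gamma+\mu)}{1+\epsilon R}\,(\xi_{f}-R)$ already obtained in the Extinction theorem,
\[
L_{f}V(R)\le-\,\frac{\beta+\epsilon(\gamma+\mu)}{1+\epsilon R}\,(R-\xi_{f})^{2}\le 0\qquad\text{for all }R\in D .
\]

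Since $L_{f}V(R)=0$ precisely when $R=\xi_{f}$, the set $\mathcal{A}:=\{R\in D:L_{f}V(R)=0\}$ equals $\{\xi_{f}\}$, and because the trajectory remains in the bounded set $D$ by invariance, the fractional LaSalle theorem (Lemma 4.6 in \cite{huo2015effect}) yields that $R=\xi_{f}$ is globally asymptotically stable on $D$, i.e. $\lim_{t\to\infty}R(t;R_{0},\alpha)=\xi_{f}$ for every $R_{0}\in(0,\mathcal{K}_{0}^{f})$. The step I expect to be the genuine obstacle is the bookkeeping around the hypotheses and the domain: one must verify carefully that the stated bounds on $\epsilon$ are exactly what places $\xi_{f}$ in the open interval $(0,\mathcal{K}_{0}^{f})$ (so that $\xi_{f}$ lies in the invariant set and $\mathcal{A}=\{\xi_{f}\}$ rather than $\{0,\xi_{f}\}$, which would occur on the closed interval used for extinction), and one must make sure the cited form of the fractional LaSalle principle really covers a Lyapunov function that is only defined and continuously differentiable on the open set $D$ and is not radially unbounded at the right endpoint; combining the blow-up of $V$ at $0$ with the positive invariance of $(0,\mathcal{K}_{0}^{f})$ is what makes that argument rigorous. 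No new difficulty arises from the fractional chain rule, since, exactly as in the Extinction proof, it is only needed as the inequality provided by Lemma 3.1 of \cite{vargasdeleon2015volterra}.
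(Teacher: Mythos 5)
Your proposal matches the paper's proof essentially step for step: the same Volterra-type Lyapunov function $V(R)=R-\xi_{f}-\xi_{f}\ln(R/\xi_{f})$, the same comparison inequality from Lemma 3.1 of \cite{vargasdeleon2015volterra}, the same factorization giving $L_{f}V(R)\le-\frac{(R-\xi_{f})^{2}(\beta+\epsilon(\gamma+\mu))}{1+\epsilon R}$, and the same appeal to the fractional LaSalle theorem (Lemma 4.6 in \cite{huo2015effect}). The only differences are cosmetic and arguably improvements: you work on the open set $(0,\mathcal{K}_{0}^{f})$ using the blow-up of $V$ at $0^{+}$ instead of the paper's closed interval with the artificial value $V(0)=1$, and you make explicit (correctly) that the hypothesis on $\epsilon$ is what places $\xi_{f}$ inside $(0,\mathcal{K}_{0}^{f})$, a point the paper leaves unstated.
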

\begin{proof}
 \noindent Let $D := [0, \mathcal{K}_{0}^{f}]$ and consider $V: D \to \mathbb{R}$ defined by:
\[
 V(R) := R - \xi_{f} - \xi_{f} \ln \left( \frac{R}{\xi_{f}} \right), \ \text{for}\ R \in D, R \neq 0, \ \text{and}\ \ V(0) = 1.
\]
Clearly, $V \in C^{1}(D - \{0\}; \mathbb{R})$ and $V$ is positive definite at $R = \xi_{f}$. 
Also, given $R \in D - \{0\}$, using the Lemma 3.1 of \cite{vargasdeleon2015volterra} we have:
\begin{align*}
    L_{f}V(R) & \le  
    \Big(1 - \frac{\xi_{f}}{R}\Big) \frac{d^{\alpha} R}{dt^{\alpha}} \\
  &= \Big(1 - \frac{\xi_{f}}{R}\Big) \left( \beta \frac{R}{1+\epsilon R} (N - R) - (\gamma + \mu) R \right) \\
    & = (R - \xi_{f}) \Big( \dfrac{\beta (N - R)}{1 +\epsilon R} - (\gamma + \mu) \Big) \\
     & = \frac{(R - \xi_{f})(\beta+ \epsilon(\gamma + \mu) )}{1 +\epsilon R} (\xi_{f} - R) \\
     & = -\frac{(R - \xi_{f})^2 (\beta+ \epsilon(\gamma + \mu))}{1 +\epsilon R}. \\
\end{align*}
\noindent Thus, $L_{f}V(R) \le 0$ for all $R \in D - \{0\},$ and moreover, $$\mathcal{A} := \{R \in D - \{0\} : L_{f}V(R) = 0\} = \{\xi_{f}\}.$$
Then, by the fractional LaSalle theorem (Lemma 4.6 in \cite{huo2015effect}), we have that $R = \xi_{f}$ is a globally asymptotically stable equilibrium point of (\ref{model-fra}). Then,  $\displaystyle\lim_{t \to \infty} R(t; R_0,\alpha) = \xi_{f}$ for any $R_{0} \in (0, \mathcal{K}_{0}^{f})$.\\
\end{proof}


\section{Numerical experiments}
Our three proposed approaches, \eqref{model-det}–\eqref{model-fra}, can be validated numerically using data from various real-world epidemiological phenomena that align with our hypotheses. However, we drew inspiration for the phenomenon of antimicrobial resistance (AMR), specifically focusing on the resistance of {\it Escherichia coli} to colistin. Although this particular case serves as a representative study for exploring our mathematical framework, it does not fully capture the complexities inherent in AMR.
\par 
 Colistin has been the final option for treating multidrug-resistant gram-negative bacteria (MDR-GNB), and it is also extensively used in veterinary practice \cite{mendelson2018one}. The recent discovery of plasmid-mediated colistin resistance, exemplified by the mcr-1 gene in {\it E. coli}, has raised concerns regarding its use in food-producing animals and its potential to accelerate the spread of resistance. Studies have called for reassessment of colistin usage and dosing in animal husbandry to safeguard its effectiveness in human healthcare \cite{rhouma2016resistance, goyes2023management}. The reversibility of AMR, especially for colistin, has been demonstrated in laboratory experiments using {\it E. coli} carrying the mcr-1 gene \cite{wu2021reversing}. Under colistin-free conditions, there is a significant reduction in antibiotic resistance genes due to the elimination of MDR plasmids. This suggests that the high fitness costs associated with mobile genetic elements, such as plasmids, make resistance unstable, and consequently, strict antibiotic control can help reverse resistance driven by these genes \cite{wu2021reversing}. Other studies have also demonstrated that {\it E. coli} adapts to prolonged antimicrobial exposure through genetic regulation of porins and efflux pumps, which are crucial for bacterial resistance. A coordinated increase in genes encoding efflux pump transporters and a decrease in porin expression \cite{viveiros2007antibiotic}. Therefore, both transcriptional and post-translational regulation of membrane proteins are essential for physiological adaptation of gram-negative bacteria to antibiotic stress \cite{viveiros2007antibiotic}. 
\par
 We begin with a population of $N = 1e6$ bacteria and consider two initial conditions: $R(0) = N-1$ (indicating that almost all bacteria are resistant) to demonstrate extinction (or clearance of bacteria) and $R(0) = 1$ (where almost all bacteria are susceptible) to illustrate persistence. Under stressful conditions, the turnover rate of \textit{E. coli} ($\mu$) is typically significantly lower than that under optimal conditions because of the adverse factors affecting bacterial growth. In our study, we assumed a natural turnover rate of $\mu=0.1$, implying that the bacterial population doubles or clears approximately every 10 days. The population-dependent rate of R-plasmid transfer through conjugation $\beta$ for {\it E. coli} under colistin exposure was derived from observational data in \cite{wu2021reversing}, where it was suggested that a susceptible bacterium acquires an R-plasmid approximately every two days. The rate of R-plasmid loss was estimated to be within the range of $[0, 2]$, indicating that resistant bacteria may lose plasmids every 12 hours ($\gamma=2$), or never lose them ($\gamma=0$). The saturation parameter $1/\epsilon$ which represents the number of resistant bacteria at which plasmid transfer becomes less efficient due to competition between sensitive and resistant bacteria, was assumed to be equal to the total population $N$ ($\epsilon=1/N=1e-6$).  
\par 
 Finally, considering that AMR largely stems from an evolutionary process in which bacteria gradually adapt to their environment under antibiotic stress \cite{baquero2021evolutionary}, it is possible to capture AMR dynamics by highlighting the influence of historical exposure in the presence of fitness costs associated with plasmids. In this context, small values of the parameter  $\alpha$ (fractional derivative order) imply that the dynamics of plasmid-mediated resistance in bacteria exhibit significant memory effects. This suggests that the resistance characteristics of {\it E. coli} are not only dependent on current conditions, but are also heavily influenced by their previous exposure to antibiotics (selective pressure) as well as the presence of plasmids. For $\alpha \in (0,1)$, when $\alpha$ is closer to zero, the influence of past states becomes more pronounced, indicating that previous antibiotic pressures contribute significantly to the current state of resistance. 
\par 
Table \ref{table1} shows the values of the parameters used for the purpose of these experiments. 

\begin{table}[H]
\centering
\begin{tabular}{llll}
  \hline
  Parameter & Description & Dimension &Value  \\ \hline 
  $N$  &Constant population &Population& 1e6 \\
 $\mu$  &Turnover rate of bacteria &Time$^{-1}$ & 0.1 \\
 $\gamma$ &Rate of R--plasmids loss &Time$^{-1} $ &   [0, 2]  \\
 $\beta$ & Rate of  R-plasmid acquisition through conjugation &(Population $\times$ Time)$^{-1}$ &  5e-7 \\
 $\epsilon$ & Saturation rate & Population$^{-1}$ &1e-6\\
 $\sigma$ & Random perturbation for the parameter $\beta$ &Population$^{-1}$ $\times$ Time$^{-1/2}$ &[1e-7, 5e-6]\\
 $\alpha$ & Order of the fractional derivative &Dimensionless &(0,1) \\
  \hline 
\end{tabular}
\caption{Parameters involved  in the mathematical model \eqref{model-gen}: Description, dimension and values. }
\label{table1}
\end{table}
 Figure \ref{fig-extinc} illustrates the extinction scenarios for antibiotic-resistant bacteria for the three different modelling approaches (deterministic, fractional, and stochastic). These scenarios were evaluated for varying values of the plasmid loss rate $\gamma$. In each of these plots, it can be observed that the resistant bacterial populations tended to decline over time, provided that the threshold $\mathcal{K}_0$ parameters remained below 1. In the deterministic and fractional models, the extinction of resistant bacteria occurs at a faster rate as $\gamma$ increases from 1 to 2. This behaviour suggests a strong correlation between the plasmid loss rate and the speed of bacterial extinction, with higher values of $\gamma$ leading to a more rapid approach toward zero population levels for resistant strains. The stochastic model, while exhibiting a more erratic pattern due to inherent randomness, also shows a trend toward the extinction of resistant bacteria when the critical threshold $\mathcal{K}_{0}^{s}$ remains below 1. This threshold condition indicates that the resistant bacteria cannot sustain their population over time, reinforcing the consistency of the extinction behaviour across all three modelling approaches under these conditions.

\begin{figure}[H]
\centering
\subfigure[Deterministic]{
\includegraphics[width=7.5cm, height=5cm]{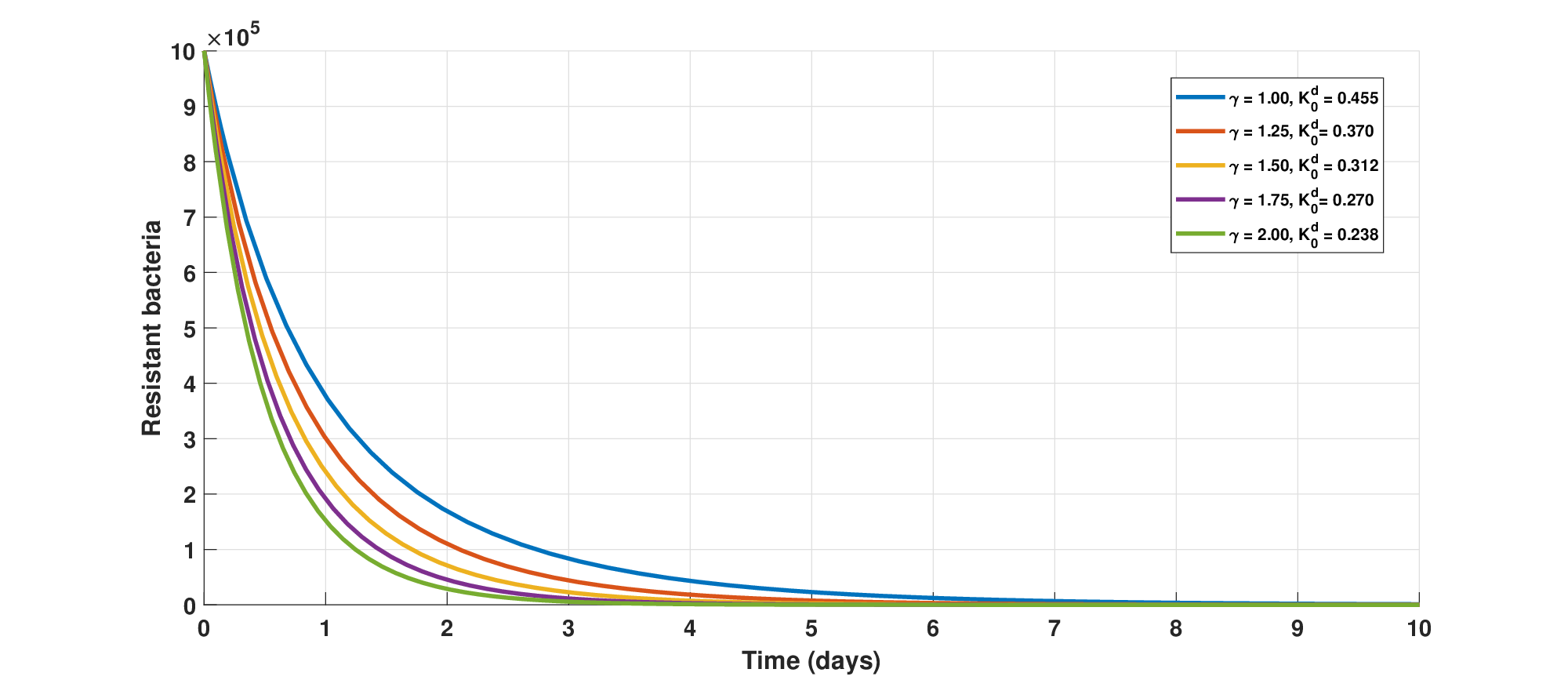}}
\subfigure[Stochastic]{
\includegraphics[width=7.5cm, height=5cm]{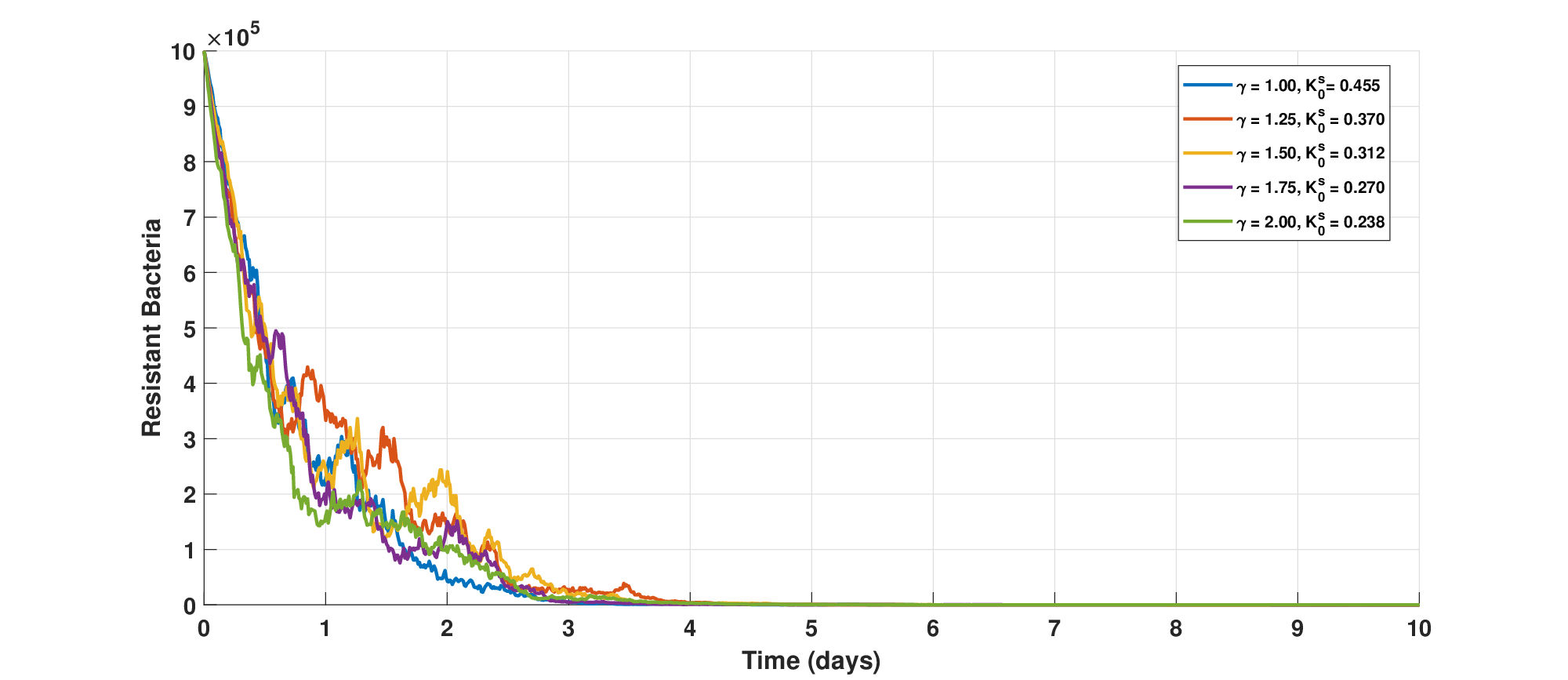}}
\subfigure[Fractional]{
\includegraphics[width=7.5cm, height=5cm]{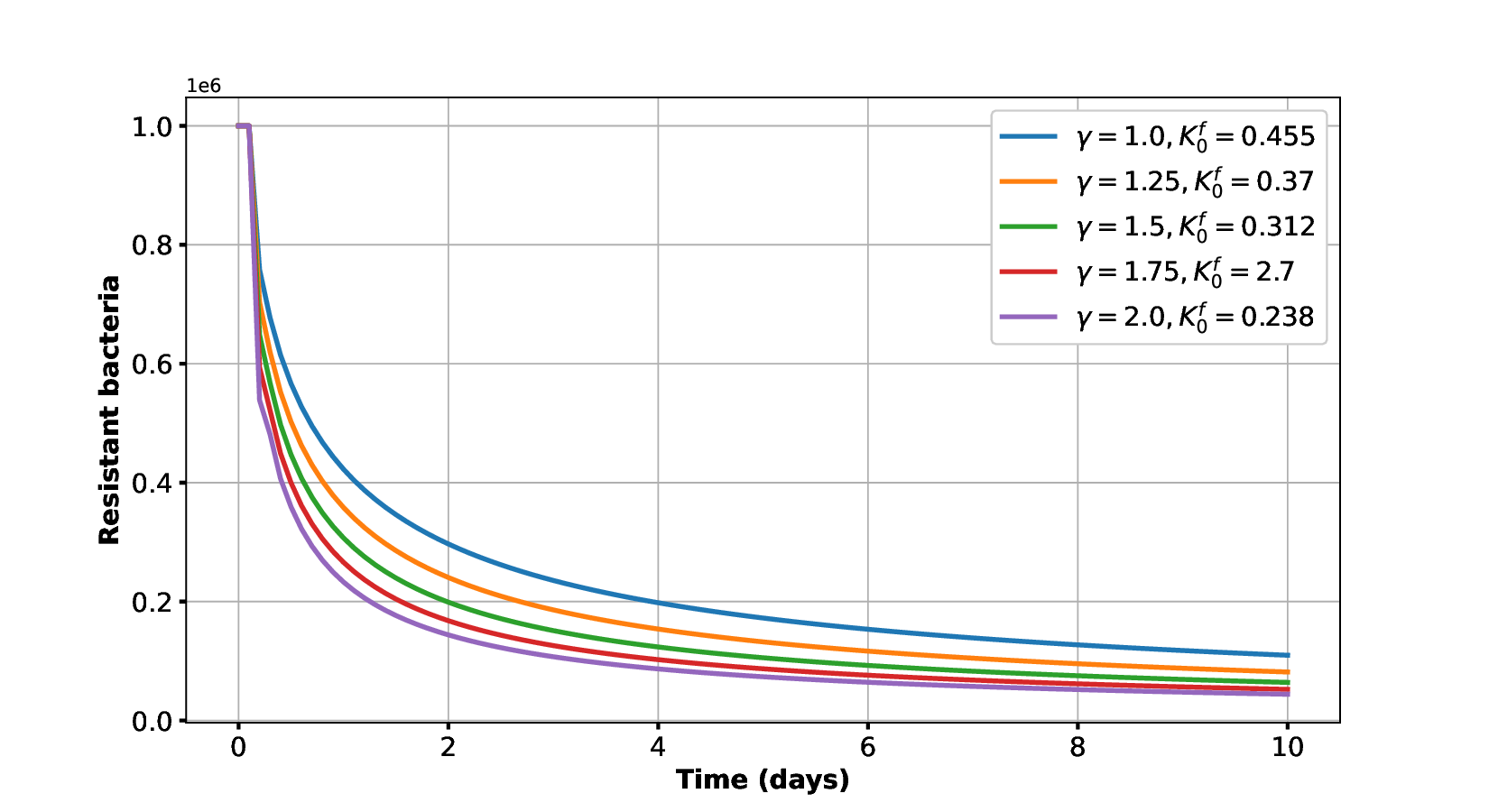}}
\caption{Extinction scenery for the three approaches and different values of $\gamma=$1, 1.25, 1.5, 1.75, 2. For the stochastic case  the perturbation parameter was $\sigma=1e-6$. For the fractional case the derivative order was $\alpha=0.7$. The initial condition $R(0)=N-1$. }
\label{fig-extinc}
\end{figure}
 Figure \ref{fig-persist} illustrates the persistence scenario for resistant bacteria using the three modelling approaches. This condition, $\mathcal{K}_0 > 1$, signifies that the resistant population can sustain itself over time, thereby avoiding a clearance. In each model, as the plasmid loss rate $\gamma$ increased within the range of 0–1, resistant bacteria exhibited a stable persistence level. This indicates that lower values of $\gamma$ promoted plasmid retention, thereby supporting the maintenance of antibiotic resistance within the bacterial population. Notably, for values of $\gamma$ near one, the population of resistant bacteria approached a steady-state level early on and remained nearly constant over extended time periods. This steady-state persistence across all three models suggests that, when $\mathcal{K}_0 > 1$ and $\gamma \leq 1$, the conditions facilitate a resilient resistant population capable of persisting over time.                                                    
\begin{figure}[H]
\centering
\subfigure[Deterministic]{
\includegraphics[width=7.5cm, height=5cm]{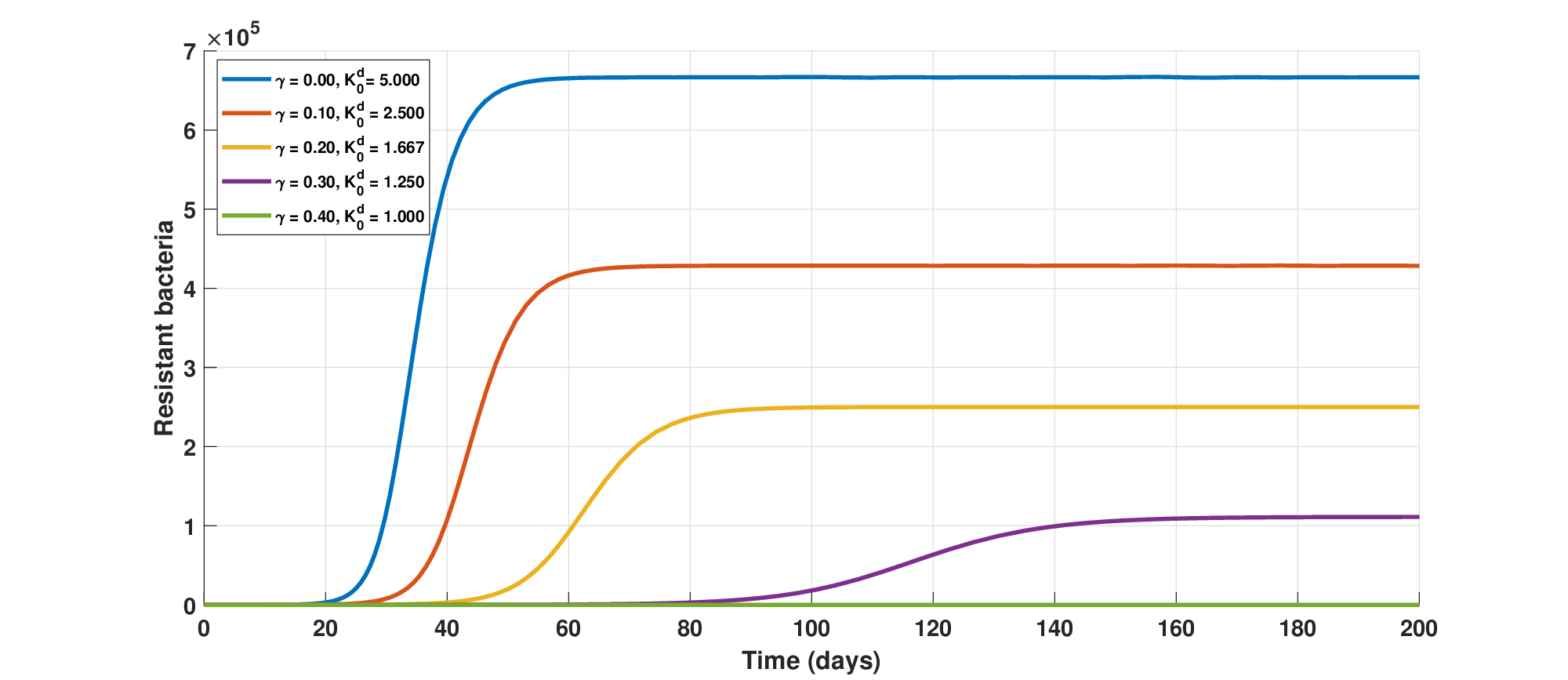}}
\subfigure[Stochastic]{
\includegraphics[width=7.5cm, height=5cm]{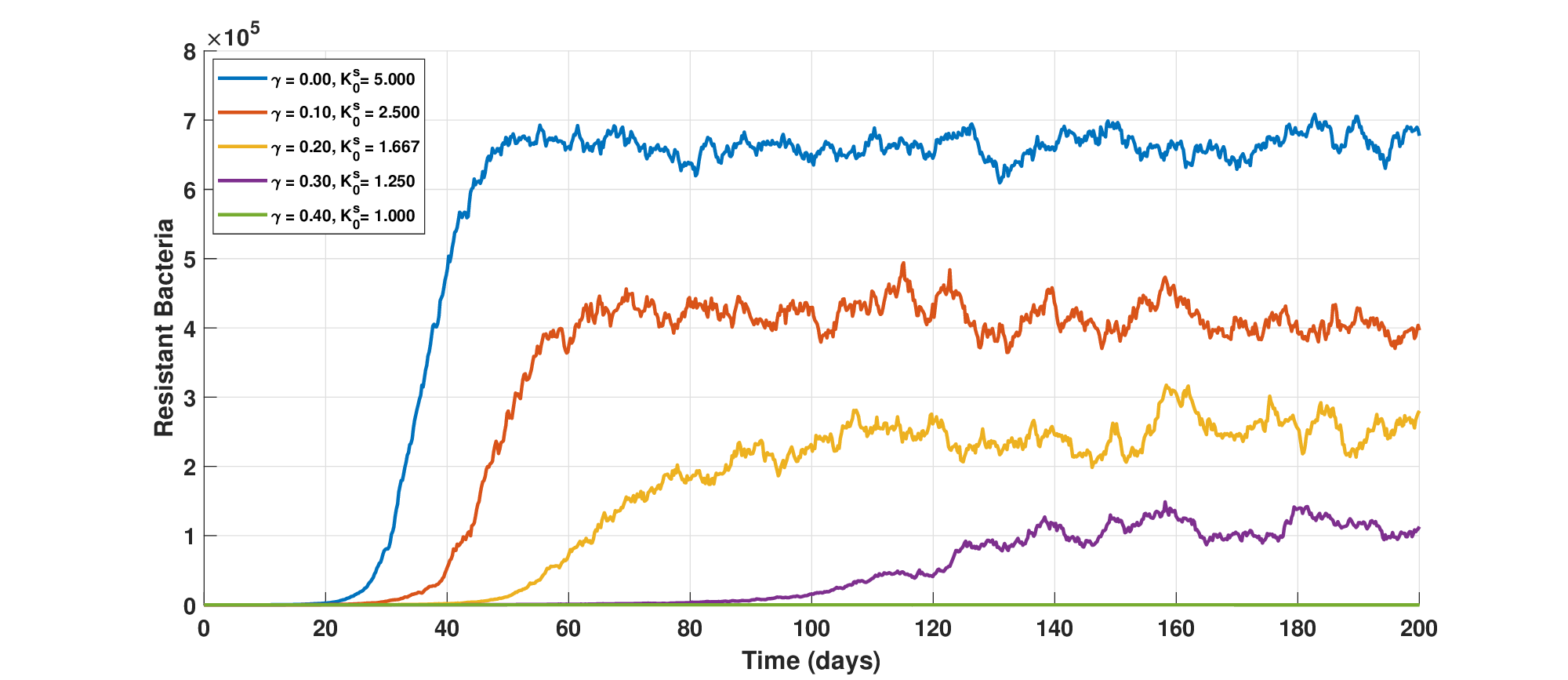}}
\subfigure[Fractional]{
\includegraphics[width=7.5cm, height=5cm]{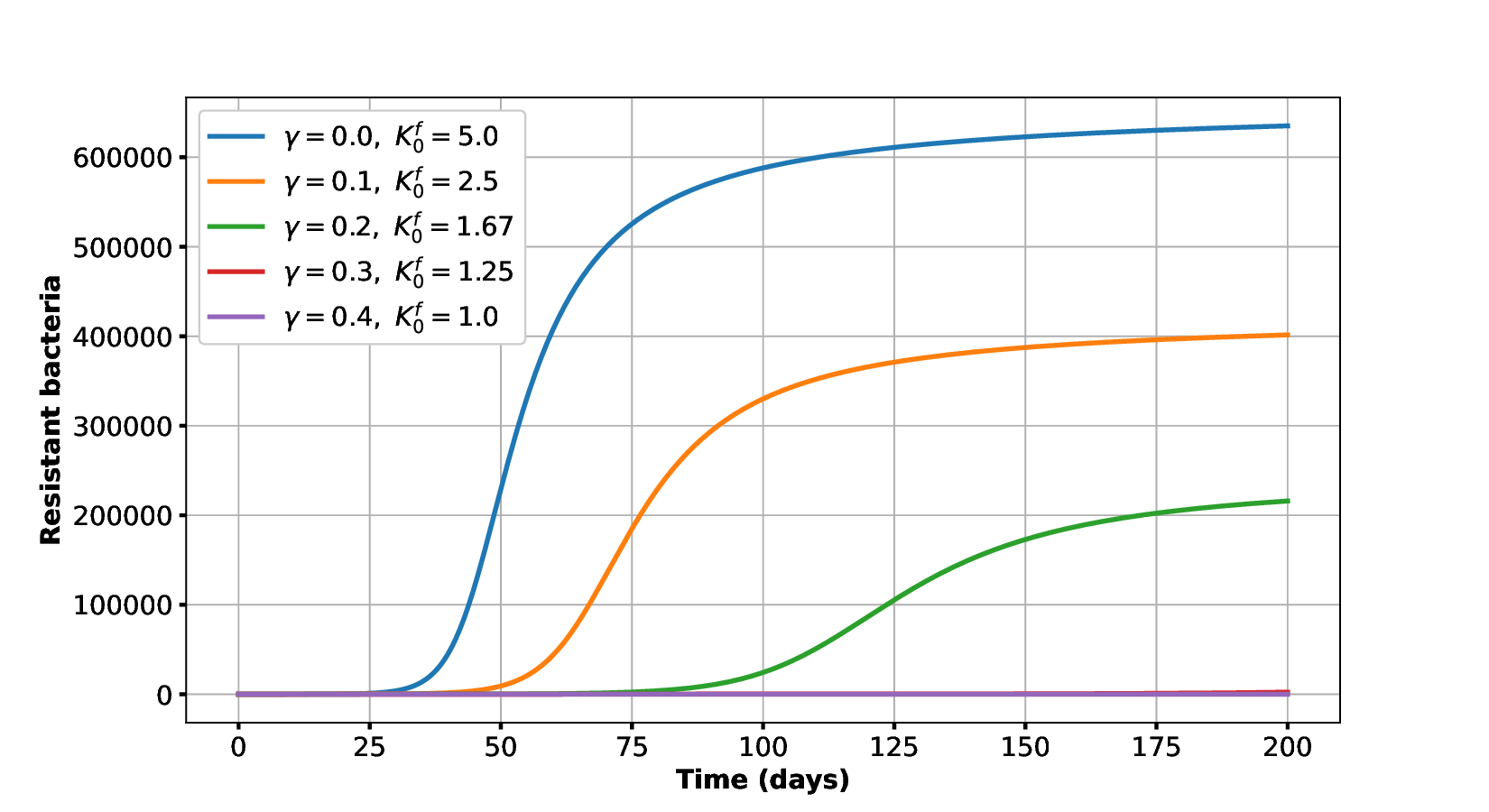}}
\caption{Persistence scenery for the three approaches and different values of $\gamma=$0, 0.1, 0.2, 0.3, 0.4. For the stochastic case  the perturbation parameter was $\sigma=1e-6$. For the fractional case the derivative order was $\alpha=0.7$. The initial condition $R(0)=1$.}
\label{fig-persist}
\end{figure}
 Figure \ref{fig-stoch-sigma} presents a comparison between the deterministic and stochastic models for different values of the white noise intensity $\sigma$. In these simulations, the plasmid loss rate was set to $\gamma = 2$ with an initial condition of $R(0) = N - 1$ for the extinction scenario, and $\gamma = 0$ with $R(0) = 1$ for the persistence scenario, where $N$ represents the total bacterial population. The plots reveal how the stochastic noise intensity $\sigma$ influences the extinction and persistence behaviours in the stochastic model compared with the deterministic model. As the value of $\sigma$ increases, the stochastic model exhibits greater fluctuations, reflecting the impact of random perturbations on bacterial population dynamics. This variability contrasts with the deterministic model, which follows a smooth trajectory toward either extinction or persistence, depending on the initial conditions and plasmid loss rate. Specifically, under the extinction condition ($\gamma = 2$, $R(0) = N - 1$), the stochastic model shows a more erratic decline in resistant bacteria as $\sigma$ increases, suggesting that a higher noise intensity accelerates fluctuations toward extinction. Conversely, in the persistence scenario ($\gamma = 0$, $R(0) = 1$), the stochastic model demonstrated sustained fluctuations around a stable population level. Here, increasing the noise intensity $\sigma$ introduces variability, but does not prevent the resistant population from persisting. These results indicate that stochastic fluctuations modulated by $\sigma$ can significantly alter the dynamics of the resistant bacteria. Higher noise levels amplify random effects, leading to more pronounced deviations from the deterministic model outcomes, particularly affecting the stability of bacterial populations under both extinction and persistence conditions. 

\begin{figure}[H]
\centering
\subfigure[Extinction]{
\includegraphics[width=7.5cm, height=5cm]{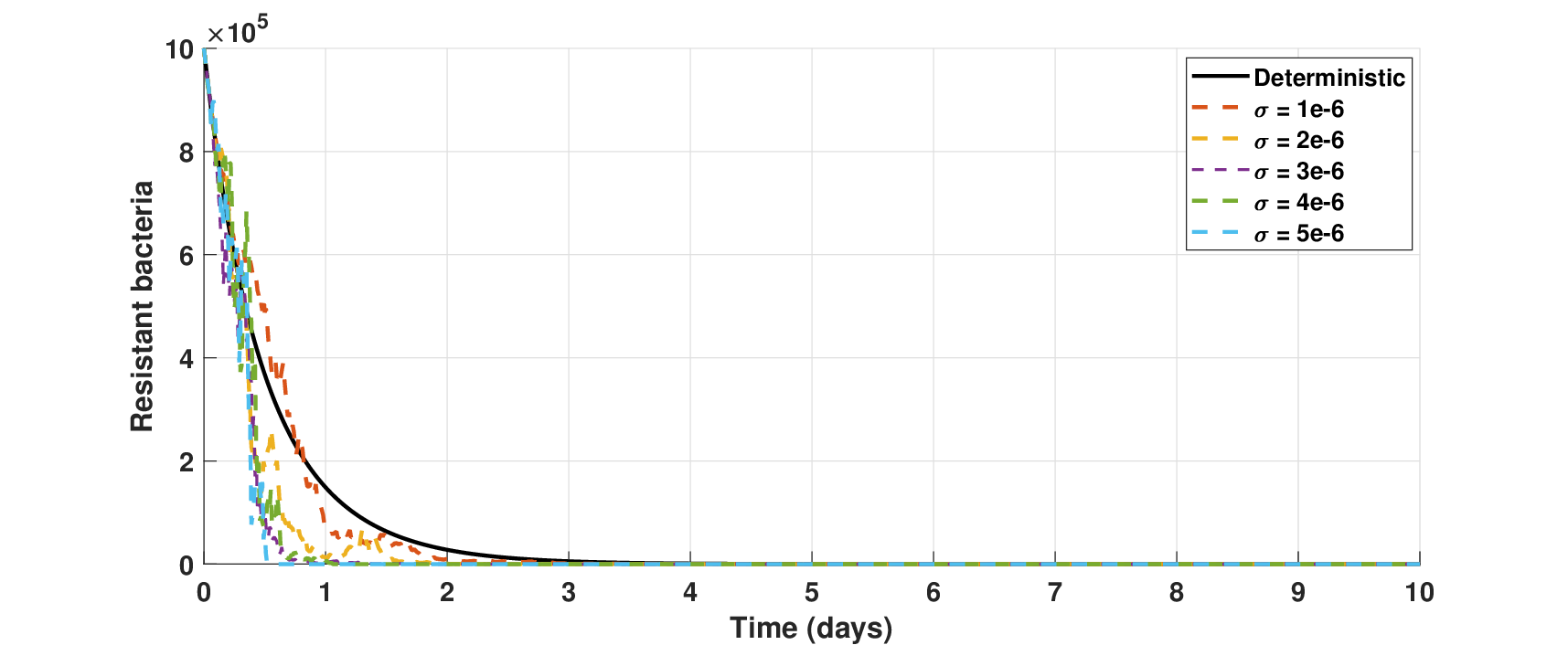}}
\subfigure[Persistence]{
\includegraphics[width=7.5cm, height=5cm]{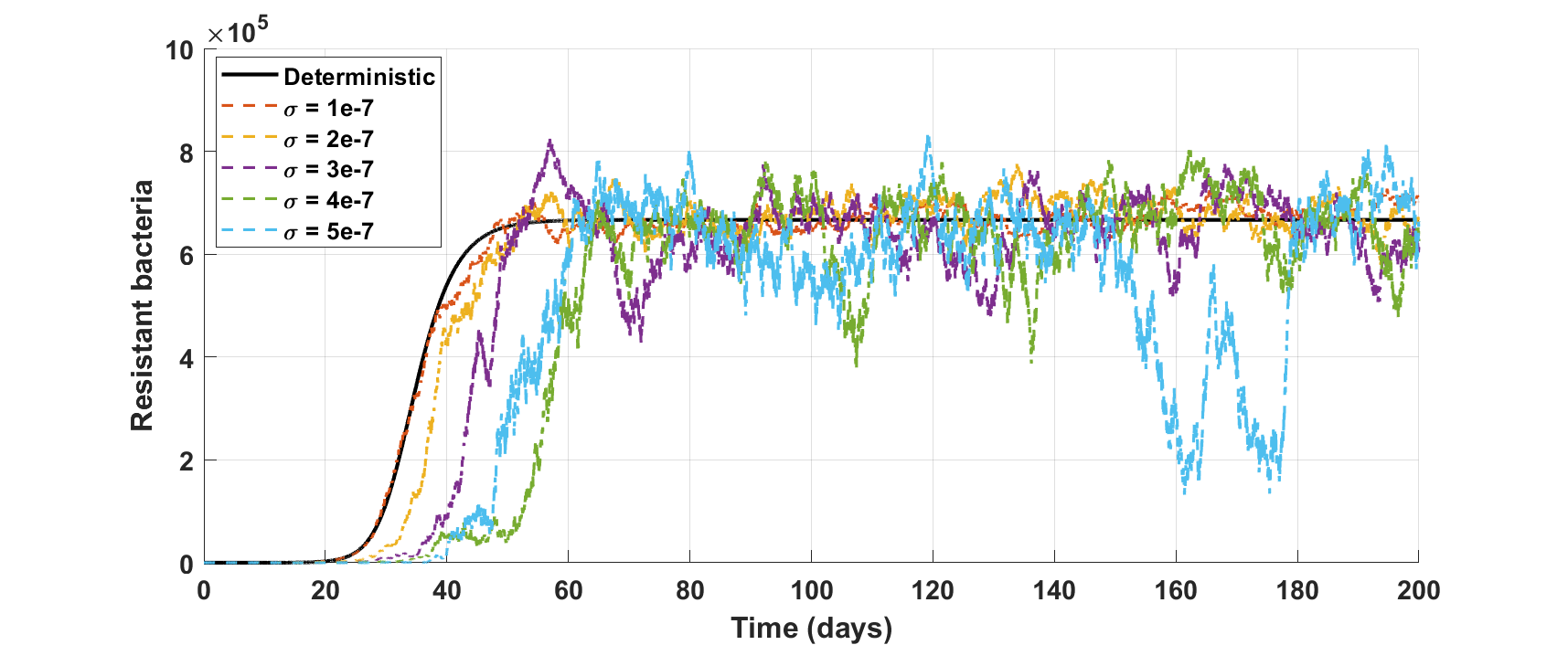}}
\caption{Deterministic and stochastic comparison for different values of $\sigma$=1e-6, 2e-6, 3e-6, 4e-6, 5e-6. Here $\gamma=2$ and $R(0)=N-1$ for extinction, and $\gamma= 0$ and $R(0)=1$ for persistence.}
\label{fig-stoch-sigma}
\end{figure} 

 In Figure \ref{fig-fractio-alpha}, the fractional model is analyzed for different values of the fractional derivative order $\alpha$. The simulations were conducted with $\gamma = 1.5$ and an initial condition of $R(0) = N - 1$ for the extinction scenario, and with $\gamma = 0.2$ and $R(0) = 1$ for the persistence scenario, where $N$ is the total bacterial population. The results show that as the order $\alpha$ of the fractional derivative increases, both the extinction and persistence behaviours in the fractional model become more pronounced. Specifically, in the extinction scenario, larger values of $\alpha$ led to a more rapid approach to zero for the resistant bacterial population. This suggests that a higher order of the fractional derivative enhances the rate at which the population declines, likely due to the increased memory effects inherent in fractional systems, which cause the population to react more dynamically to changes in the initial conditions and parameters. Conversely, in the persistence scenario, increasing $\alpha$ results in faster stabilization at the persistence level. This stabilization implies that higher values of $\alpha$ facilitate more efficient regulation of the population dynamics, allowing the resistant bacteria to reach a steady state more quickly. This behaviour is characteristic of fractional models, where memory effects become more pronounced as $\alpha$ increases, leading to faster stabilization in response to lower plasmid loss rates.

\begin{figure}[H]
\centering
\subfigure[Extinction $\gamma= 1.5$]{
\includegraphics[width=7.5cm, height=5cm]{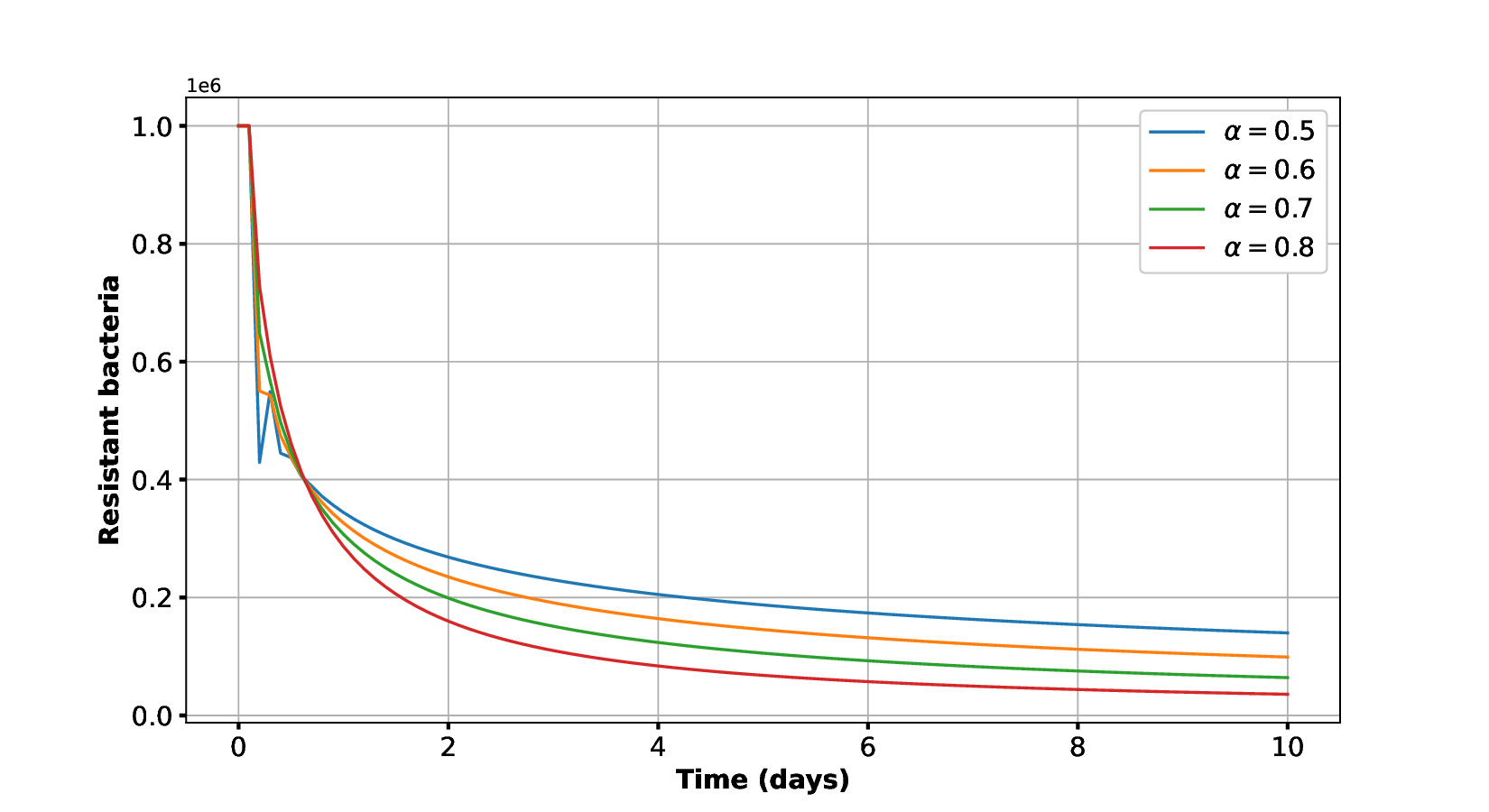}}
\subfigure[Persistence $\gamma= 0.2$ ]{
\includegraphics[width=7.5cm, height=5cm]{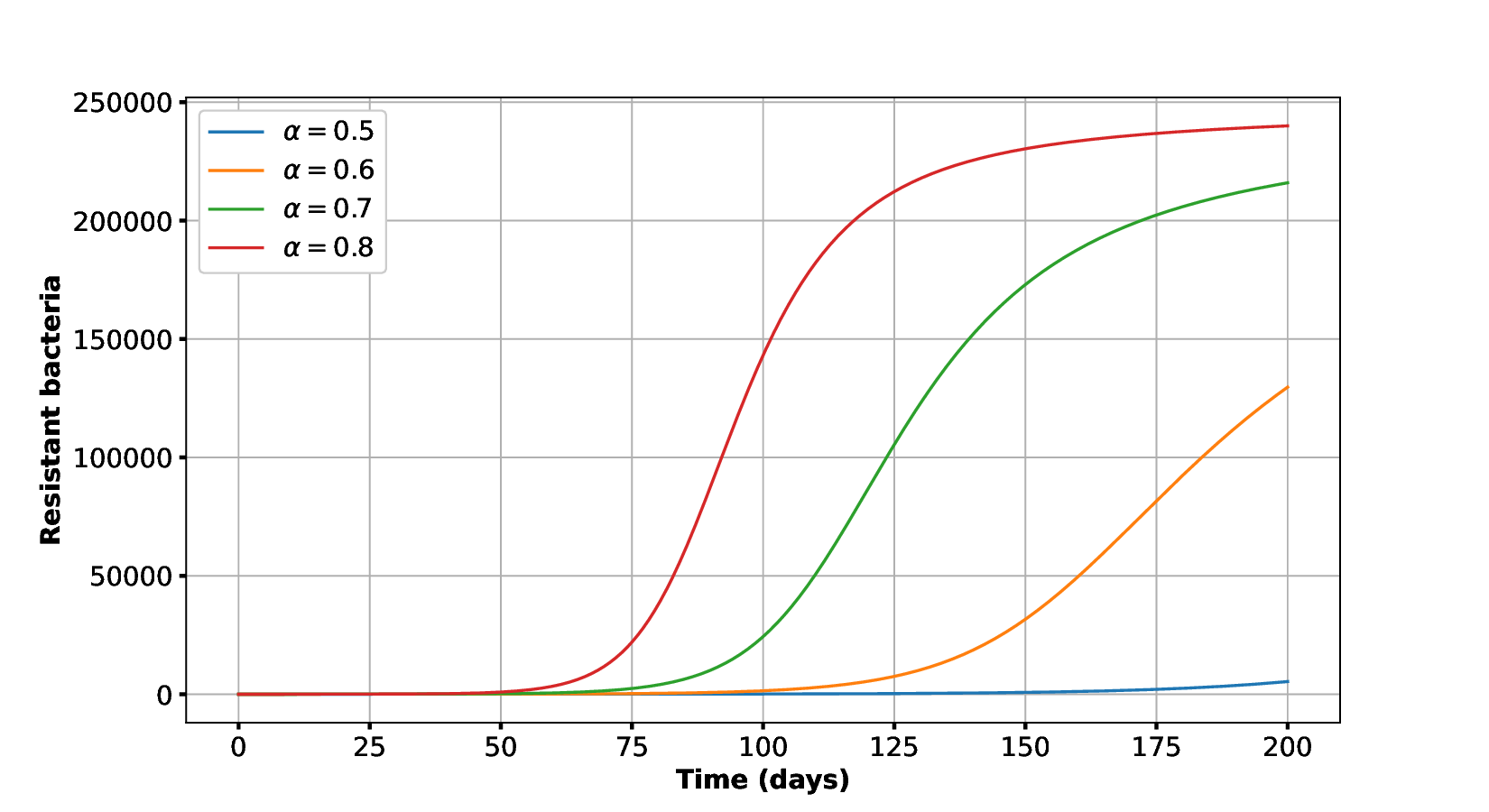}}
\caption{Fractional approach for different values of the order of the derivative $\alpha$=0.5, 0.6, 0.7, 0.8.  Here $\gamma=1.5$ and $R(0)=N-1$ for extinction, and $\gamma= 0.2$ and $R(0)=1$ for persistence. }
\label{fig-fractio-alpha}
\end{figure}
In these histograms, which show an approximation of the stationary distribution, it is illustrated that as \( \gamma \) increases, the mean of the stationary distribution of the resistant bacterial population shifts towards the origin.

\begin{figure}[H]
    \centering
    \subfigure[$\gamma=0$]{\includegraphics[width=7.5cm, height=5cm]{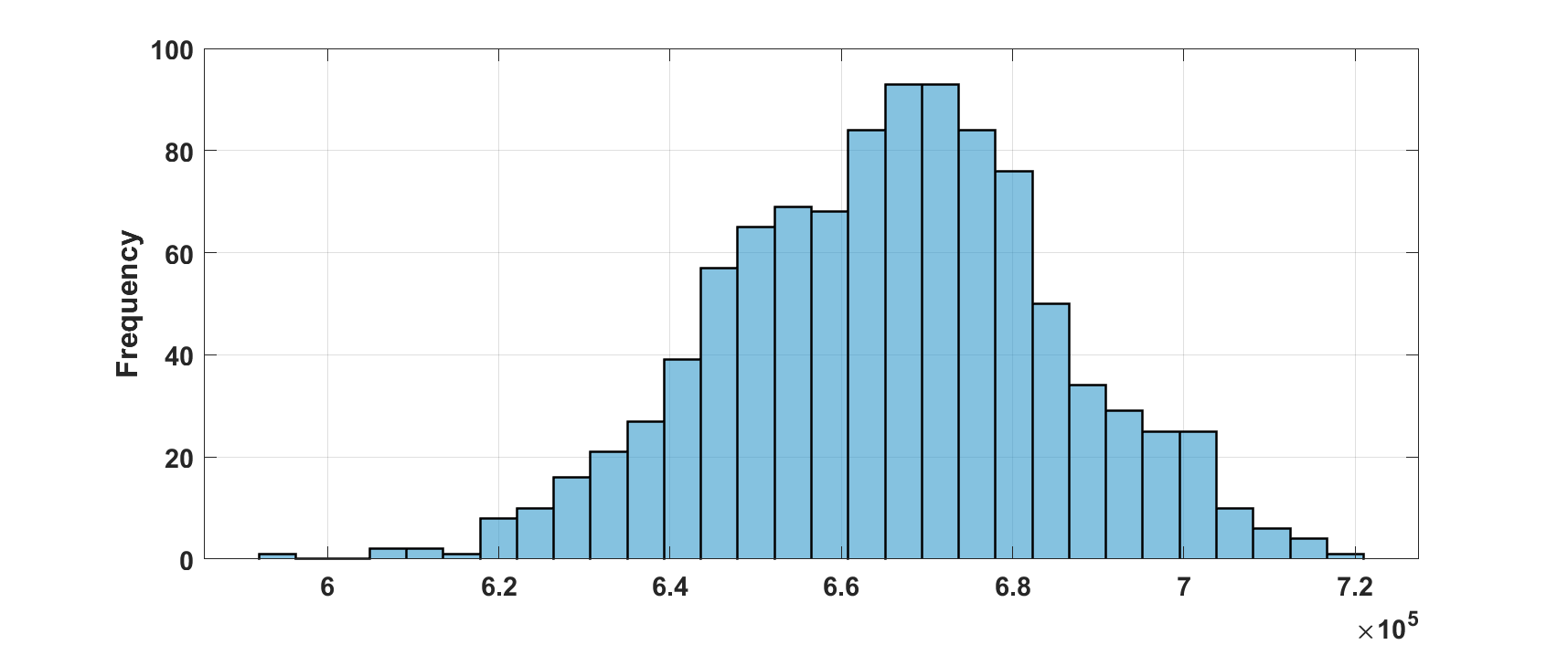}}
    \subfigure[$\gamma=0.1$]{\includegraphics[width=7.5cm, height=5cm]{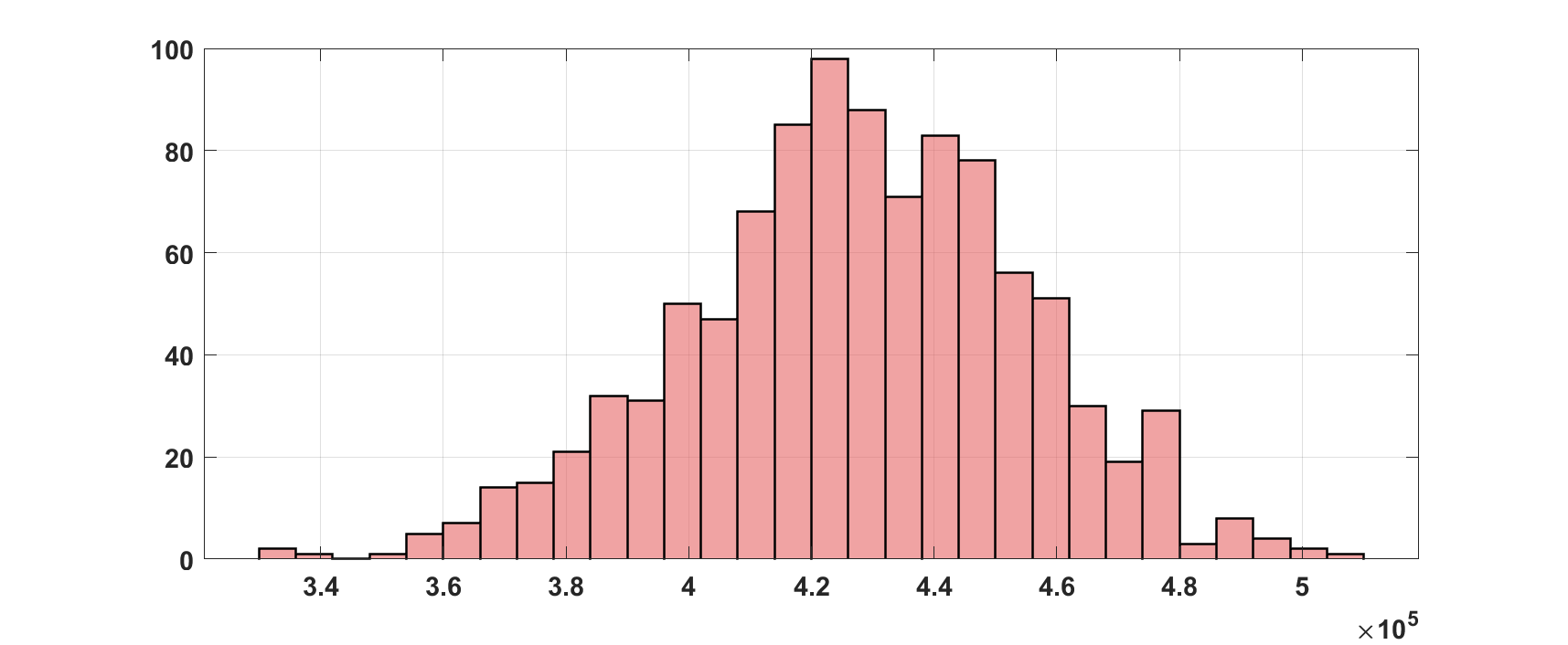}}
    \subfigure[$\gamma=0.2$]{\includegraphics[width=7.5cm, height=5cm]{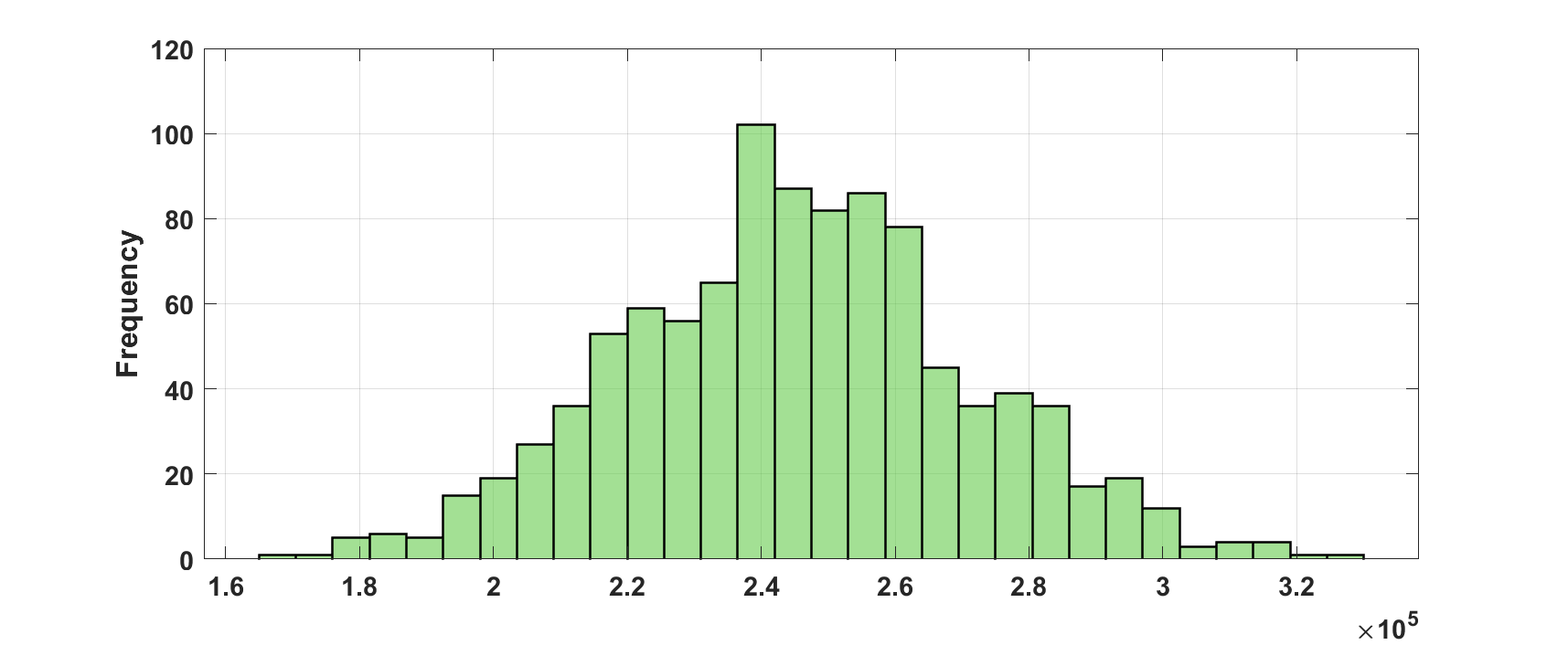}}
    \subfigure[$\gamma=0.3$]{\includegraphics[width=7.5cm, height=5cm]{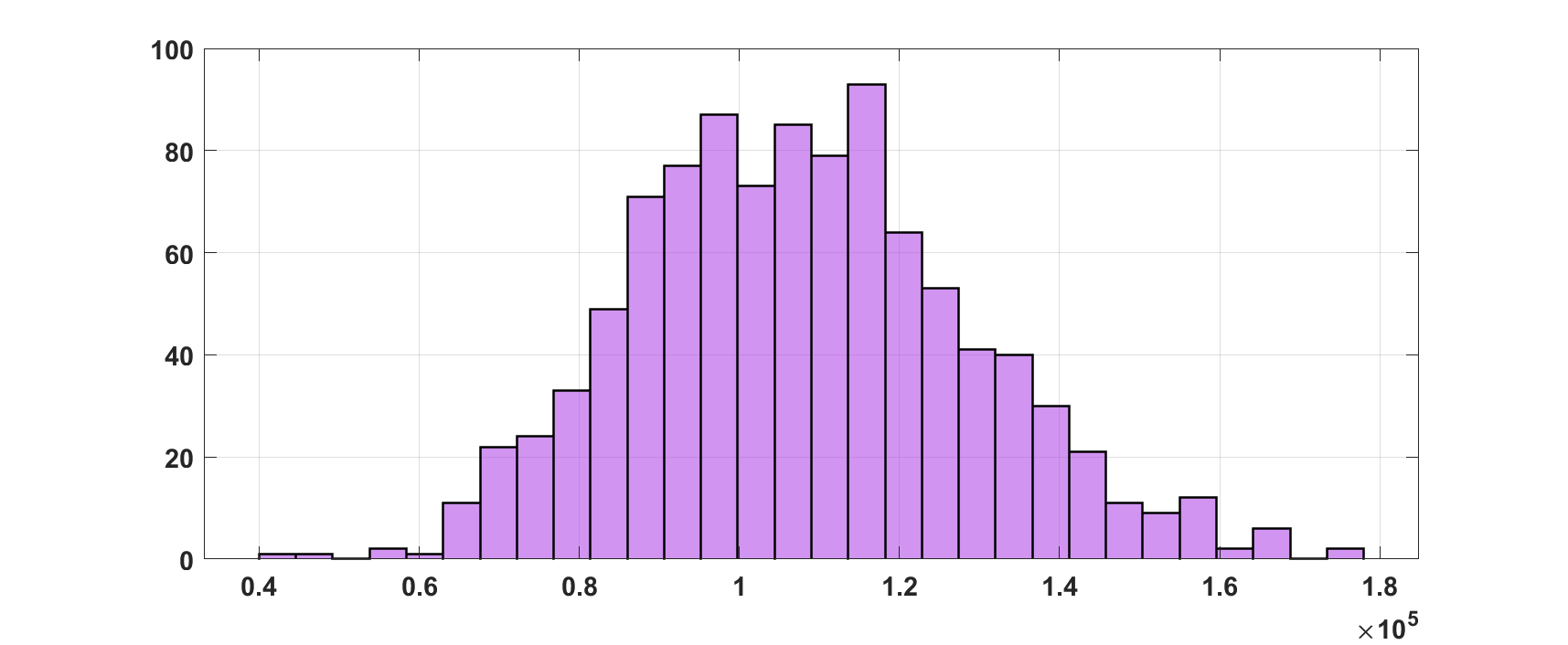}}
    \subfigure[$\gamma=0.4$]{\includegraphics[width=7.5cm, height=5cm]{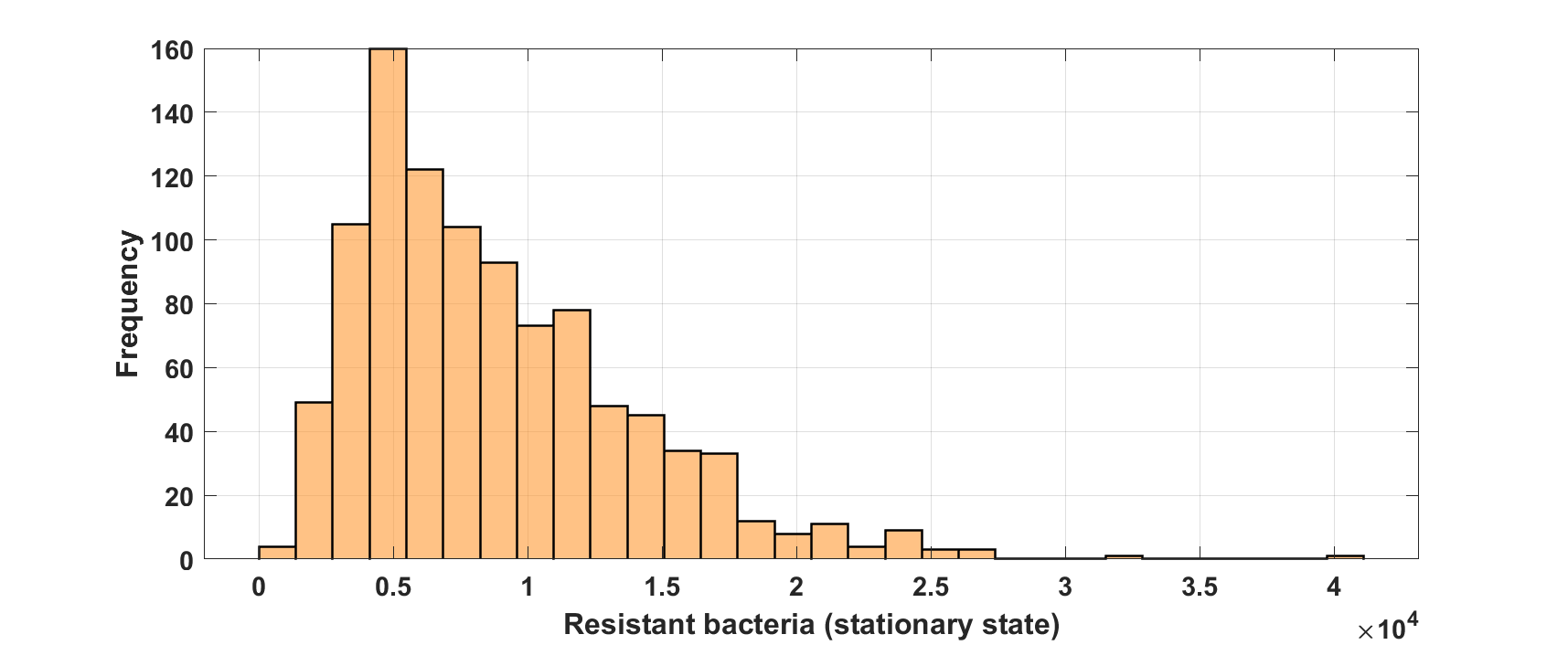}}
    \caption{Stationary mean (histograms) of the stochastic model for different values of $\gamma$ associated with the persistence of resistant bacteria. The parameter values used are those from Table \ref{table1}. Additionally, $\sigma=1e-7$ and the initial condition $R(0)=N-1$.}
    \label{fig:histogramas_gamma}
\end{figure}

\section{Discussion}
This work presented three different approaches to study a simple mathematical model that can capture the phenomenon of antimicrobial resistance (AMR) reversal, as well as other epidemiological phenomena that fit the underlying hypotheses in each of the three approaches. The main motivation for this study lies in the mathematical approach to modelling the AMR reversal. In particular, this study focused on the qualitative analysis of deterministic, stochastic, and fractional models applied to this phenomenon.
\par 
 The results obtained from the analysis in this study show that all three models—deterministic, stochastic, and fractional—are well posed from a mathematical standpoint, satisfying the conditions of existence, uniqueness, and invariance within the domain \( (0, N) \), which provides them with coherence and biological relevance. For each of these models, three thresholds \( \mathcal{K}_{0}^{d}, \mathcal{K}_{0}^{s}, \) and \( \mathcal{K}_{0}^{f} \) were identified structurally similar to the basic reproduction number characteristic of epidemiological models. It was demonstrated that, if these thresholds are less than one, the asymptotic stability of the trivial equilibrium point \( R=0 \) is guaranteed. In the stochastic model, the condition \( \mathcal{K}_{0}^{s}< 1 \) also implies that the trivial equilibrium is stable in probability. In contrast, when these thresholds are greater than one, both the deterministic and fractional models guarantee the asymptotic stability of the nontrivial equilibrium point. 
\par 
 In relation to the phenomenon of extinction or bacterial clearance, it should be noted that the modelling in the deterministic and fractional cases is similar. The stochastic model was more detailed and presented more scenarios. The existence of the stationary measure in the stochastic case provides a probabilistic framework that allows for a better understanding of the evolution of AMR. 
\par 
 To validate the theoretical models, our numerical experiments focused on the antimicrobial resistance (AMR) of \textit{Escherichia coli} to colistin, a last-resort antibiotic for infections caused by multidrug-resistant gram-negative bacteria (MDR-GNB). By analyzing the extinction and persistence of resistant bacteria in relation to different plasmid loss rates (\(\gamma\)) and the intensity of stochastic noise (\(\sigma\)), we observed that the threshold parameter \(\mathcal{K}_0^{\cdot}\) is critical for determining whether a resistant population goes extinct or persists. When \(\mathcal{K}_0^{\cdot} < 1\) and \(\gamma\) increase, plasmid loss becomes more frequent and accelerates the extinction of resistant bacteria, as the cost of maintaining the plasmid outweighs the adaptive benefits. In contrast, under persistent conditions (\(\mathcal{K}_0^{\cdot} > 1\) and \(\gamma \leq 1\)), bacteria manage to maintain the plasmid, showing early stabilization in both deterministic and fractional models as well as a persistent level of resistance in the stochastic model, even with small fluctuations. This suggests that plasmid retention is key to maintaining resistance in an environment with continuous antibiotic exposure, where a lower plasmid loss rate allows resistance to persist over time.  
\par 
 Furthermore, by incorporating the memory of prior exposure to antibiotics (evolutionary pressure) through the fractional approach, represented by the parameter \(\alpha\), it was observed that higher values of \(\alpha\) intensified the response of the bacterial population. In the extinction scenario, high values of \(\alpha\) facilitate a more rapid decline in the resistant population due to memory effects in the system, which reinforces the response to plasmid loss pressure. In the case of persistence, high values of \(\alpha\) allow for more efficient and rapid stabilization of the resistant population, demonstrating that the history of antibiotic exposure and the regulation of resistance genes (such as the reduction of porins or increased expression of efflux pumps) have an accumulating and relevant effect on bacterial adaptation. These findings highlight the importance of biological memory in AMR, as the adaptation of \textit{E. coli} to antibiotic stress is not solely dependent on current conditions, but is strongly influenced by the history of prior exposure, implying that antibiotic control strategies could leverage this effect to reverse resistance in bacterial populations. 
\par
 The results of this study provide a strong basis for the qualitative analysis of SIS-type mathematical models in general and set the stage for future research aimed at applying these findings to broader and more complex epidemiological studies. Despite the relevance of these results, this study has some  limitations. One limitation arises from the fact that the function \(g(R) = \frac{R}{1 + \epsilon R}\) is particular. Nevertheless, in \cite{builes2024stochastic}, it was proposed in a general formulation for the stochastic case. Although we wanted to focus on a real phenomenon, the choice of such a function seems appropriate. However, without significant challenges, similar results can be obtained with functions \(h\) which possess the properties determined in \cite{builes2024stochastic}. Another important limitation is the simplification of the model regarding the environmental conditions and population structure. In our approach, we assumed a homogeneous bacterial population and constant external conditions such as nutrient availability and antibiotic pressure. However, in real-world scenarios, factors such as environmental heterogeneity, competition between different bacterial strains, and fluctuations in antibiotic exposure can significantly influence resistance dynamics. Including these factors in the model could offer a more detailed perspective and allow for better representation of the complex interactions in bacterial systems exposed to antibiotics. 
\par 
 This study raises open questions for future research. A key area is the explicit calculation of the mean and variance of the stationary measure in the stochastic model (\ref{model-stoc}). The other is to identify the specific conditions under which \( R = 0 \) becomes an unstable equilibrium point in probability.

\appendix 

\section{Appendix}

\subsection{Stability}\label{A1}
\begin{definition}\textbf{Equilibrium Point of an Ordinary Differential Equation.}\label{type}\\
   Consider the ordinary differential equation:
\begin{equation}\label{ODE}
  \frac{dx}{dt} = b(x), 
\end{equation}
where $b$ is a continuous function on a non-empty interval $I \subset \mathbb{R}$, and let $\xi \in I$. We say that $\xi$ is an equilibrium point of (\ref{ODE}), or that $\xi$ is a trivial solution of (\ref{ODE}), if and only if $b(\xi) = 0$.
\end{definition}

\begin{definition}\textbf{Stability of an Equilibrium Point of an Ordinary Differential Equation.}\label{deterministic_type}\\
Let $\xi$ be an equilibrium point of (\ref{ODE}). We say that:
\begin{enumerate}
    \item $\xi$ is a stable equilibrium point of (\ref{ODE}) if and only if for every $\epsilon > 0$, there exists $\delta > 0$ such that for all $x_0 \in I$, with $|x_0 - \xi| < \delta$, we have:
    $$|x(t; x_0) - \xi| < \epsilon, \quad \text{for all } t \in \mathbb{R}_{0}^{+}.$$ 
    Otherwise, we say that $\xi$ is an unstable equilibrium point of (\ref{ODE}).
    
    \item $\xi$ is an asymptotically stable equilibrium point of (\ref{ODE}) if and only if $\xi$ is a stable equilibrium point of (\ref{ODE}) and there exists $\delta > 0$ such that for all $x_0 \in I$ with $|x_0 - \xi| < \delta$, we have:
    $$\lim_{t \to \infty} x(t; x_0) = \xi.$$
    
    \item $\xi$ is a globally asymptotically stable equilibrium point of (\ref{ODE}) if and only if $\xi$ is a stable equilibrium point of (\ref{ODE}) and for all $x_0 \in I$, we have:
    $$\lim_{t \to \infty} x(t; x_0) = \xi.$$   
\end{enumerate}
Here, $x(t; x_0)$ represents the global solution $x$ of (\ref{ODE}) that starts at $x_0$ at time $t$.
\end{definition}

\begin{definition}\textbf{Equilibrium Point of a Stochastic Differential Equation.}\label{type}\\
Consider the stochastic differential equation:
\begin{equation}\label{SDE}
  dX = b(X) \, dt + \sigma(X) \, dB(t), 
\end{equation}
\noindent where $b$ and $\sigma$ are continuous functions on an interval $I \subset \mathbb{R}$, $B$ is a Brownian motion on a probability space $(\Omega, \mathcal{F}, \mathbb{P})$, and let $\xi \in I$. We say that $\xi$ is an equilibrium point of (\ref{SDE}), or that $\xi$ is a trivial solution of (\ref{SDE}), if and only if $b(\xi) = \sigma(\xi) = 0$.
\end{definition}

\begin{definition}\textbf{Stability of an Equilibrium Point of a Stochastic Differential Equation.}\label{type_stochastic}\\
Let $\xi$ be an equilibrium point of (\ref{SDE}). We say that:
\begin{enumerate}
    \item $\xi$ is a stable equilibrium point in probability of (\ref{SDE}) if and only if for every $\epsilon > 0$ and $r \in (0, 1)$ there exists a $\delta > 0$ such that for all $x_0 \in I$, $|x_0 - \xi| < \delta$, it holds that $$\mathbb{P}\Big(|X(\cdot, t; x_0) - \xi| < \epsilon, \text{ for all } t \in \mathbb{R}_{0}^{+}\Big) \geq 1 - r.$$ In the contrary case, we say that $\xi$ is an unstable equilibrium point of (\ref{SDE}).
    \item $\xi$ is an asymptotically stable equilibrium point in probability of (\ref{SDE}) if and only if $\xi$ is a stable equilibrium point in probability of (\ref{SDE}) and for every $r \in (0, 1)$ there exists a $\delta > 0$ such that for all $x_0 \in I$, $|x_0 - \xi| < \delta$, it holds that $$\mathbb{P}\Big(\lim_{t \to \infty} X(\cdot, t; x_0) = \xi \Big) \geq 1 - r.$$   
    \item $\xi$ is a globally asymptotically stable equilibrium point in probability of (\ref{SDE}) if and only if $\xi$ is a stable equilibrium point in probability of (\ref{SDE}) and for all $x_0 \in I$, it holds that $$\mathbb{P}\Big(\lim_{t \to \infty} X(\cdot, t; x_0) = \xi \Big) = 1.$$
\end{enumerate}
Where $X(\cdot, t; x_0)$ represents the global solution $X$ of (\ref{SDE}) that starts at $x_0$ at time $t.$
\end{definition}
\begin{definition}\textbf{Equilibrium Point of a Fractional Differential Equation.}\label{fractional_type}\\
   Consider the fractional differential equation:
\begin{equation}\label{EDF}
  \frac{d^{\alpha}x}{dt^{\alpha}} = b(x), 
\end{equation}
where $b$ is a continuous function on a non-empty interval $I$ of $\mathbb{R},$ $\alpha \in (0,1),$ and let $\xi \in I.$ We say that $\xi$ is an equilibrium point of (\ref{EDF}) or that $\xi$ is a trivial solution of (\ref{EDF}) if and only if $b(\xi) = 0.$
\end{definition}

\begin{definition}\textbf{Stability of an Equilibrium Point of a Fractional Differential Equation.}\label{type}\\
Let $\xi$ be an equilibrium point of (\ref{EDF}). We say that:
\begin{enumerate}
    \item $\xi$ is a stable equilibrium point of (\ref{EDF}) if and only if for every $\epsilon > 0$, there exists a $\delta > 0$ such that for every $x_0 \in I$, $|x_0 - \xi| < \delta$, it holds that $$|x(t; x_0, \alpha) - \xi| < \epsilon, \ \text{for all}\ t \in \mathbb{R}_{0}^{+}.$$ Otherwise, we say that $\xi$ is an unstable equilibrium point of (\ref{EDF}).
    \item $\xi$ is an asymptotically stable equilibrium point of (\ref{EDF}) if and only if $\xi$ is a stable equilibrium point of (\ref{EDF}) and there exists $\delta > 0$ such that for every $x_0 \in I$, $|x_0 - \xi| < \delta$, it holds that $$\lim _{t \to \infty} x(t; x_0, \alpha) = \xi.$$   
    \item $\xi$ is a globally asymptotically stable equilibrium point of (\ref{EDF}) if and only if $\xi$ is a stable equilibrium point of (\ref{EDF}) and for every $x_0 \in I$, it holds that $$\lim _{t \to \infty} x(t; x_0, \alpha) = \xi.$$   
\end{enumerate}
where $x(t; x_0, \alpha)$ represents the global solution $x$ of (\ref{EDF}) that starts at $x_0$ at time $t.$
\end{definition}


\section*{Funding}
This research was supported by FAPESP
(Grant Number 2022/08948-2) and Universidad de Antioquia (project number 2023-58830)

\section*{Data availability}
There are no data in this study. 

\section*{Conflict of interest}
The authors have disclosed no conflicts of interest. 
\bibliographystyle{abbrv}
\bibliography{sample}


\end{document}